\theoremstyle{plain}
\newtheorem{thm}{Theorem}[section]
\newtheorem{prop}[thm]{Proposition}
\newtheorem{lemma}[thm]{Lemma}
\newtheorem{cor}[thm]{Corollary}
\theoremstyle{definition}
\theoremstyle{remark}
\newtheorem{rmk}[thm]{Remark}
\newcommand{\vol}{\ensuremath{\mathsf{vol}}}
\newcommand{\imag}{\ensuremath{\mathrm{Im}}}
\newcommand{\G}{\ensuremath{\mathrm{G}_2}}
\newcommand{\SP}{\ensuremath{\mathrm{Spin}(7)}}
\newcommand{\R}{\ensuremath{\mathbb R}}
\newcommand{\C}{\ensuremath{\mathbb C}}
\newcommand{\Qu}{\ensuremath{\mathbb H}}
\newcommand{\Oc}{\ensuremath{\mathbb O}}
\newcommand{\ph}{\ensuremath{\varphi}}
\newcommand{\st}{\ensuremath{\ast}}
\newcommand{\tr}{\ensuremath{\operatorname{Tr}}}
\newcommand{\spn}{\ensuremath{\operatorname{span}}}
\newcommand{\rest}[2]{\ensuremath{{#1} |_{{}_{#2}}}}
\newcommand{\be}{\ensuremath{\bar e}}
\newcommand{\ce}{\ensuremath{\check e}}
\newcommand{\bn}{\ensuremath{\bar \nu}}
\newcommand{\cn}{\ensuremath{\check \nu}}
\newcommand{\co}{\ensuremath{\check \omega}}
\newcommand{\nup}{\ensuremath{\nu^{\perp}}}
\newcommand{\oo}{\ensuremath{\mathbf 1}}
\newcommand{\oi}{\ensuremath{\mathbf i}}
\newcommand{\oj}{\ensuremath{\mathbf j}}
\newcommand{\ok}{\ensuremath{\mathbf k}}
\newcommand{\oee}{\ensuremath{\mathbf e}}
\newcommand{\oie}{\ensuremath{\mathbf i \mathbf e}}
\newcommand{\oje}{\ensuremath{\mathbf j \mathbf e}}
\newcommand{\oke}{\ensuremath{\mathbf k \mathbf e}}
\newcommand{\spp}{\ensuremath{{\slashed{\mathcal S}_{\!+}}}}
\newcommand{\spm}{\ensuremath{{\slashed{\mathcal S}_{\!-}}}}
\newcommand{\spi}{\ensuremath{{\slashed{\mathcal S}}}}
\newcommand{\jm}{\ensuremath{\mathbf j_{\scriptscriptstyle L}}}
\newcommand{\inner}[1]{\left\langle #1 \right\rangle}
\newcounter{commentCounter}
\numberwithin{equation}{section}
\numberwithin{table}{section}
\numberwithin{figure}{section}
\begin{document}

\title{Deformations of calibrated subbundles \\ of Euclidean spaces via twisting by special sections}

\author{Spiro Karigiannis\footnote{The research of the first author is partially supported by a Discovery Grant from the Natural Sciences and Engineering Research Council of Canada. Portions of this research were conducted while the second author was an undergraduate student research assistant of the first author in summer 2009, and was supported by the Chinese University of Hong Kong.} \\ {\it Department of Pure Mathematics, University of Waterloo} \\ \tt{karigiannis@math.uwaterloo.ca} \and Nat Chun-Ho Leung \\ {\it Department of Pure Mathematics, University of Waterloo} \\ \tt{nat.leung@uwaterloo.ca} }

\maketitle

\begin{abstract}
We extend the ``bundle constructions'' of calibrated submanifolds, due to Harvey--Lawson in the special Lagrangian case, and to Ionel--Karigiannis--Min-Oo in the cases of exceptional calibrations, by ``twisting'' the bundles by a special (harmonic, holomorphic, or parallel) section of a complementary bundle. The existence of such deformations shows that the moduli space of calibrated deformations of these ``calibrated subbundles'' includes deformations which destroy the linear structure of the fibre.
\end{abstract}

\tableofcontents

\section{Introduction} \label{introsec}

In this paper we examine some explicit deformations through calibrated submanifolds of calibrated ``subbundles'' of Euclidean spaces. Let $M$ be a Riemannian manifold with a calibration, that happens to also be the total space of a vector bundle over a base $Q$. Then a \emph{calibrated subbundle} $N$ of $M$ is a calibrated submanifold of $M$ which is also a subbundle of $M$, in the sense that $N$ is the total space of a vector bundle over a submanifold $P$ of $Q$, whose fibres are subspaces of the corresponding fibres of $M$. The following are some examples.

If $L^p$ is a $p$-dimensional austere submanifold of $\R^n$, then the total space of its conormal bundle $N^* L$ is an $n$-dimensional special Lagrangian submanifold in $T^* \R^n \cong \C^n$. Similarly, if $L^2$ is a $2$-dimensional submanifold of $\R^4$ which is minimal (or negative superminimal) then the bundle $\Lambda^2_- (\R^4)$ of anti-self dual $2$-forms on $\R^4$ restricts on $L^2$ to the direct sum $E \oplus F$ of a rank $1$ and a rank $2$ real vector bundle over $L^2$, respectively, and the total space of $E$ is associative (or the total space of $F$ is coassociative) in $\Lambda^2_- (\R^4) \cong \R^7$. A similar construction holds for Cayley subbundles of $\R^8$ as rank $2$ real vector bundles over a minimal surface $L^2$ in $\R^4$, obtained by restricting the negative spinor bundle $\spm (\R^4) \cong \R^8$ of negative chirality spinors on $\R^4$ to the submanifold $L^2$ and decomposing the restriction into the direct sum of two rank $2$ real vector bundles over $L^2$. The total space of each one of these is a Cayley submanifold of $\R^8$. The  construction in the special Lagrangian case is due to Harvey--Lawson~\cite{HL}, while the constructions in the case of the exceptional calibrations appeared in Ionel--Karigiannis--Min-Oo~\cite{IKM}. All these constructions were later generalized to noncompact manifolds of special holonomy that are total spaces of vector bundles over compact bases, such as the Stenzel manifolds $T^* S^n$, which admit Calabi-Yau metrics, and the Bryant--Salamon manifolds of $\G$ or $\SP$ holonomy, by Karigiannis--Min-Oo~\cite{KM}.

The purpose of the present paper is the following. In 1993, a generalization of the Harvey--Lawson conormal bundle construction of special Lagrangian submanifolds in $\C^n$ was presented by Borisenko~\cite{Bo}. This construction involves ``twisting'' the conormal bundle by the gradient of a smooth function $\rho$ on the austere submanifold $L^p \subset \R^n$, and finding the condition on $\rho$ for the resulting smooth $n$-dimensional submanifold of $\C^n$, which is \emph{no longer} a vector subbundle of $\rest{T^* \R^n}{L}$, to be special Lagrangian. In the case when $p=2$ the function $\rho$ needs to be a harmonic function on $L$. For $p > 2$, the condition is more complicated, and was only considered by Borisenko for $p=3$ and $n=4$. First, we rederive the Borisenko construction using the notation of~\cite{IKM}, but for general $p$ and $n$. We also extend his construction by considering a twisting by a closed $1$-form $\mu$, rather than an exact $1$-form $d\rho$. We then proceed to adapt this idea to give an analogous construction of twisted calibrated subbundles in the setting of the exceptional calibrations on $\R^7$ and $\R^8$. The main results in this paper are contained in Theorems~\ref{SLthm},~\ref{assthm},~\ref{coassthm}, and~\ref{cayleythm}.

These new examples of calibrated submanifolds of Euclidean spaces are deformations of calibrated subbundles which are no longer total spaces of vector bundles. This shows that the moduli space of calibrated submanifolds near a calibrated subbundle includes both deformations of the base $L$ (as a submanifold of the required ``type'' for the associated vector bundle to be calibrated), \emph{and} deformations of the ``fibre'' in a way that destroys the linear structure, but remains a foliation of smooth submanifolds foliated by the original base $L$ of the calibrated subbundle. \emph{In particular this answers, in the negative, the question posed at the very end of~\cite{KM} about whether calibrated subbundles can only deform as bundles.}

The deformation theory of \emph{compact} calibrated submanifolds was first studied by McLean~\cite{Mc}. His arguments used the Hodge theory of compact oriented Riemannian manifolds extensively, in particular the $L^2$-orthogonal decomposition of the space of smooth forms into harmonic forms plus exact forms plus coexact forms. To study the moduli space of \emph{noncompact} calibrated submanifolds, one needs noncompact analogues of the Hodge theorem, which are much more complicated. However, in the case of noncompact oriented Riemannian manifolds which are \emph{asymptotically cylindrical} or \emph{asymptotically conical}, for example, then the techniques of Lockhart--McOwen~\cite{LM, L} can be employed. Much work has been done on the deformation theory of noncompact calibrated submanifolds which are asymptotically conical or asymptotically cylindrical. A partial list of references to such work includes~\cite{J, JS, Lo1, Lo2, Pac}.

The outline of our paper is as follows. Sections~\ref{borisenkosec} and~\ref{exceptionalsec} discuss our ``twisted'' calibrated subbundle constructions in the special Lagrangian and exceptional cases, respectively. Section~\ref{examplesec} presents an explicit example, and in Section~\ref{conclusionsec} we summarize some of the more important observations that can be made and questions that can be asked. Finally the Appendix collects a useful lemma on the symmetric polynomials of a matrix and the octonion multiplication table, which are used in the text to prove the main theorems.

\begin{rmk}
As the present paper is in some sense a sequel to both~\cite{IKM} and~\cite{KM}, we use the notation established in those papers throughout. Readers may find it helpful to familiarize themselves with~\cite{IKM} before reading the present paper. Although the proofs of the main theorems are somewhat similar to those in~\cite{IKM}, we provide as much detail as possible for completeness. The special Lagrangian case (Theorem~\ref{SLthm}) is the most different, and the equations~\eqref{SLthmeq} derived there may prove to be interesting in and of themselves.
\end{rmk}

{\bf Acknowledgements.} The authors would like to thank Ho-Yeung Hung for helping to check some of the original calculations, in the special Lagrangian and the coassociative cases, in the summer of 2009. The first author would also like to thank Jason Lotay for useful discussions.

\section{Review and extension of the Borisenko construction} \label{borisenkosec}

In this section we review and extend the Borisenko generalization~\cite{Bo} of the Harvey--Lawson conormal bundle construction of special Lagrangian submanifolds of $\C^n$, for a general $p$-dimensional submanifold $L^p$ of $\R^n$.

Let $\{e_1, \ldots, e_p\}$ be a local orthonormal frame of tangent vectors to $L$, and let $\{e^1, \ldots, e^p\}$ be the dual coframe for the cotangent bundle. Similarly, let $\{\nu_1, \ldots, \nu_{n-p}\}$ be a local orthonormal frame of normal vector fields to $L$ and let $\{ \nu^1, \ldots, \nu^{n-p} \}$ be the dual coframe for the conormal bundle. By parallel transport using the tangential and normal connections, we can assume without loss of generality that for a fixed point $x \in L$, we have
\begin{equation} \label{assumptioneq}
(\nabla_{e_i} e_j)|^{T} _x \, \, \text{\, and \, } (\nabla_{e_i} \nu_j)|^{N}_x =0
\end{equation}
where $\nabla$ is the Levi-Civita connection on $\R^n$. If $\nu$ is any normal vector field on $L$, we define the second fundamental form in the direction of $\nu$ by
\begin{align*}
A^{\nu} : \, \, & T_x M \rightarrow T_x M \\ A^{\nu}(w) = & \, (\nabla_w \nu)^T
\end{align*}
for any tangent vector $w$ to $L$ at $x$.
\begin{rmk} \label{signconventionrmk}
As mentioned in~\cite{IKM}, here we follow the sign convention of Harvey--Lawson~\cite{HL}, which differs from the more widely used convention.
\end{rmk}
For notational convenience we will denote
\begin{equation*}
A^{k}_{ij } = A^{\nu_k}_{ij} = \inner{A^{\nu_k}(e_i), e_j} = A^{k}_{ji}.
\end{equation*}
Now at the point $x$, by our assumption, $\nabla_{e_i} e_j$ has no tangential component. Therefore \emph{at the point $x$}, we have
\begin{equation*}
\nabla_{e_i} e_j = \sum_{k=1}^{n-p} \inner{\nabla_{e_i}e_j, \nu_k} \nu_k = -\sum_{k=1}^q A^{k}_{ij} \nu_k,
\end{equation*}
where we denote $q = n - p$. Similarly, we also have, \emph{at the point $x$}, that
\begin{equation*}
\nabla_{e_i}\nu_j = \sum_{k=1}^{p} \inner{\nabla_{e_i}\nu_j, e_k} e_k = \sum_{k=1}^{p} A^{j}_{ik} e_k.
\end{equation*}
From these two formulas it follows immediately that at $x$, we have
\begin{equation} \label{secondfundformeqs}
\nabla_{e_i}e^j = - \sum_{k=1}^{q} A^k_{ij} \nu^k, \qquad \qquad \nabla_{e_i} \nu^j = \sum_{k=1}^p A_{ik}^j e^k.
\end{equation}
Let $N^*L$ be the conormal bundle of $L$ in $\R^n$. It was shown in~\cite{HL} that $N^*L$ is special Lagrangian in $T^* \R^n$ with a particular phase if and only if $L$ is austere. That is, the odd degree symmetric polynomials of $A^\nu$ vanish for every normal vector field $\nu$. More generally, we will consider the following situation. Let $\mu$ be a smooth $1$-form on $L$ and define
\begin{equation} \label{Xmudefneq}
X_{\mu} \, = \, \{(x, \xi + \mu_x) \in \rest{T^* \R^n}{L} : \, x \in L, \, \xi \in N^*_x L\}.
\end{equation}
This is a ``twisting'' of the conormal bundle $N^*L$ obtained by affinely translating each fibre $N^*_x L$ by a vector $\mu_x$, in the orthogonal complement $T^*_x L$, which varies with $x \in L$. Of course, for $\mu = 0$ we recover the conormal bundle. It is clear that $X_{\mu}$ is a smooth $n$-dimensional submanifold of $T^* R^n \cong \C^n$. We can also mnemonically write $X_{\mu}$ as ``$N^*L + \mu$''.

In order to state our theorem, we need to recall the definition of the elementary symmetric polynomials $\sigma_k(A)$ of a $p \times p$ matrix $A$. These can be defined by
\begin{equation} \label{sigmakeq}
\det(I + t A) \, = \, \sum_{k=0}^p t^k \sigma_k(A).
\end{equation}
The cases $\sigma_1(A) = \tr(A)$ and $\sigma_p(A) = \det(A)$ are the most familiar, but all these matrix invariants play an important role in special Lagrangian geometry. Note that $\sigma_0(A) = 1$.

\begin{prop} \label{SLprop}
The submanifold $X_\mu$ is Lagrangian in $T^* \R^n$ if and only if $d\mu = 0$.
\end{prop}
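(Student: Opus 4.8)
The plan is to compute the restriction of the canonical symplectic form $\omega = \sum_i dx^i \wedge d\xi_i$ on $T^*\R^n$ to the submanifold $X_\mu$ and determine exactly when it vanishes. I would first parametrize $X_\mu$: over a point $x \in L$, a point of $X_\mu$ has the form $(x, \mu_x + \sum_k t_k \nu^k)$, where $(t_1, \ldots, t_q)$ are linear fibre coordinates and the $\nu^k$ are the conormal coframe from~\eqref{secondfundformeqs}. A tangent vector to $X_\mu$ is then obtained either by moving in a base direction $e_i$ (while dragging along $\mu$ and the frame $\nu^k$, which vary over $L$) or by moving in a fibre direction $\partial/\partial t_k$. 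Using~\eqref{secondfundformeqs} and the normalization~\eqref{assumptioneq} at the fixed point $x$, I can write down an explicit local frame for $T X_\mu$ at $x$ and then evaluate $\omega$ on pairs of these frame vectors.

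\textbf{The key computation.} The three types of pairs to check are: (i) two fibre directions $\partial/\partial t_k, \partial/\partial t_l$ — here $\omega$ vanishes automatically since both vectors are purely in the cotangent (fibre) direction; (ii) a base direction and a fibre direction — pairing $e_i$-type vectors with $\partial/\partial t_k$ should, after using the second fundamental form identities $\nabla_{e_i}\nu^j = \sum_k A^j_{ik} e^k$, produce terms involving $A^k_{ij}$ that are symmetric in $i \leftrightarrow j$ and hence cancel (this is precisely the mechanism that makes the untwisted conormal bundle Lagrangian, and the $\mu$-translation does not affect it because $\mu$ lives in $T^*L$ while the fibre coordinates pair with the conormal directions); (iii) two base directions $e_i, e_j$ — this is where $\mu$ enters. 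Writing $\mu = \sum_a \mu_a e^a$ on $L$, the contribution of the $\mu$-part to $\omega(\cdot,\cdot)$ on the pair $(e_i, e_j)$ should work out to be exactly $(d\mu)(e_i, e_j)$, since differentiating $\mu_x$ in the base direction and antisymmetrizing is the coordinate expression of the exterior derivative (the connection terms drop out at $x$ by~\eqref{assumptioneq}, consistent with $d$ being well-defined). The remaining pieces in case (iii) coming from the conormal part again cancel by symmetry of $A^k_{ij}$.

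\textbf{Conclusion and the main obstacle.} Assembling the three cases, $\rest{\omega}{X_\mu} = \pi^*(d\mu)$ where $\pi\colon X_\mu \to L$ is the projection, so $\rest{\omega}{X_\mu} = 0$ if and only if $d\mu = 0$ (the ``only if'' direction follows because $\pi$ is a submersion, so $\pi^*$ is injective on forms). I expect the main obstacle to be bookkeeping: keeping careful track of which terms are evaluated at the fixed point $x$ (where~\eqref{assumptioneq} applies) versus which must be differentiated before being restricted to $x$, so that the $d\mu$ term is captured correctly and not accidentally killed along with the connection terms. One must differentiate the defining relation before imposing the normal-coordinate normalization. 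Once that is handled cleanly, the symmetry cancellations in cases (ii) and (iii) are routine, exactly as in the untwisted case treated in~\cite{HL} and~\cite{IKM}.
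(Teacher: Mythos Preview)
Your proposal is correct and follows essentially the same computation as the paper: parametrize $X_\mu$, write down the tangent frame $E_i, F_j$, and evaluate the canonical symplectic form on all pairs, finding that only the $(E_i, E_j)$ pairing survives and equals $2(d\mu)(e_i,e_j)$ after the second-fundamental-form terms cancel by symmetry.

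One small correction: your stated reason for the vanishing in case~(ii) is off. When pairing a single base direction $E_i$ with a single fibre direction $F_k = (0,\nu^k)$ there is no second base index to symmetrize over; the vanishing is more elementary, namely $\omega(E_i, F_k) = \nu^k(e_i) - 0 = 0$ because $\nu^k$ is conormal and $F_k$ has zero base component. The symmetry of $A^{\nu}_{ij}$ is used exactly where you say in case~(iii), and that is the mechanism inherited from the untwisted conormal bundle.
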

\begin{proof}
Although this is a simple calculation, and a well known result, it is easy to get confused by the notation, since we are looking at the total space of a vector bundle, so we proceed carefully. We need to show that every tangent space to $X_{\mu} = N^*L + \mu$ is a Lagrangian subspace of the corresponding tangent space to $T^* \R^n$ if and only if $\mu$ is closed.
In local coordinates $(u^1, \ldots, u^p)$ for $L$ and coordinates $(t_1, \ldots, t_q)$ for the fibres of $N^* L$ with respect to the local trivialization $\{\nu^1, \ldots, \nu^q\}$, the immersion $h$ of $X_{\mu}$ in $T^* \R^n$ is given by
\begin{equation*}
h: (u^1, u^2, \ldots, u^p, t_1, t_2, \ldots, t_q) \mapsto \left( x^1(\mathbf u), \ldots, x^n(\mathbf u), t_1 \nu^1 + t_2 \nu^2 + \ldots + t_q \nu^q + \mu(x^1(\mathbf u), \ldots, x^n(\mathbf u) ) \right) .
\end{equation*}
A basis for the tangent space to $X_{\mu}$ at the point $h(\mathbf u_0, t_1, t_2, \ldots, t_q)$ is given by the vectors
\begin{equation} \label{EFvectseq}
\begin{aligned}
E_i & = \, h_* \left(\frac{\partial}{\partial u^i}\right) = \left( e_i , \, \sum_{k=1}^q t_k \rest{(\nabla_{e_i} \nu^k)}{x(\mathbf u_0)}  + \rest{(\nabla_{e_i} \mu)}{x(\mathbf u_0)} \right) & & i = 1, \ldots, p, \\ F_j & = \, h_* \left(\frac{\partial}{\partial t_j}\right) = (0, \nu^j) = \cn^j & & j = 1, \ldots, q.
\end{aligned}
\end{equation}
Since $T_{(x,\alpha_x)}(T^* \R^n) \cong T_x \R^n \oplus T_x \R^n$ naturally, as in~\cite{IKM} we will denote $(e_i, 0)$ by $\be_i$ and $(0, e^i)$ by $\ce^i$. Then $\be^k$ is dual to $\be_k$ and $\ce^k$ is dual to $\ce_k$. With this notation and equation~\eqref{secondfundformeqs} we can write
\begin{align} \nonumber
E_i & = \, \left( e_i , \sum_{k=1}^q \sum_{l=1}^p t_k A^k_{il} e^l + \sum_{l=1}^p (\nabla_{e_i} \mu)(e_l) e^l +  \sum_{l=1}^q (\nabla_{e_i} \mu)(\nu_l) \nu^l \right) \\ \nonumber & = \, \be_i + \sum_{l=1}^p A^{\nu}_{il} \ce^l + \sum_{l=1}^p (\nabla_{e_i} \mu)(e_l) \ce^l + \sum_{l=1}^q (\nabla_{e_i} \mu)(\nu_l) \cn^l \\ & \label{Evecteq2} = \, \be_i + \sum_{l=1}^p \left( A^{\nu}_{il} + (\nabla_{e_i} \mu)(e_l) \right) \ce^l + \sum_{l=1}^q (\nabla_{e_i} \mu)(\nu_l) \cn^l
\end{align}
where we have defined $\nu = \sum_{k=1}^q t_k \nu_k$.

In this basis, the canonical symplectic form $\omega$ on $T^* \mathbb R^n$ is given by
\begin{equation} \label{omegaeq}
\omega = \sum_{k=1}^p \be^k \wedge \ce_k + \sum_{l=1}^q \bn^l \wedge \cn_l.
\end{equation}
Hence, to check when this immersion is Lagrangian, we use \eqref{omegaeq} and compute
\begin{equation*}
\omega(F_i, F_j) = \omega( \cn^i, \cn^j ) = 0 \qquad \forall \, i,j = 1, \ldots, q,
\end{equation*}
and (dropping the summation sign over $k$ for clarity) we also have
\begin{equation*}
\omega(F_i, E_j) = \omega( \cn^i, \be_j + \left( A^{\nu}_{jk} + (\nabla_{e_j} \mu)(e_k) \right) \ce^k + (\nabla_{e_j} \mu)(\nu_k) \cn^k ) = 0 \qquad \forall \, i = 1, \ldots, q, \quad \forall \, j = 1, \ldots, p.
\end{equation*}
Finally (again with the summations over $k$ and $l$ implied) we find that
\begin{align*}
\omega(E_i, E_j) & = \, \omega( \be_i + \left( A^{\nu}_{il} + (\nabla_{e_i} \mu)(e_l) \right) \ce^l + (\nabla_{e_i} \mu)(\nu_l) \cn^l , \be_j + \left( A^{\nu}_{jk} + (\nabla_{e_j} \mu)(e_k) \right) \ce^k + (\nabla_{e_j} \mu)(\nu_k) \cn^k ) \\ & = \, A^{\nu}_{ij} + (\nabla_{e_i} \mu)(e_j) - A^{\nu}_{ji} - (\nabla_{e_j} \mu)(e_i) = 2 (d\mu)(e_i, e_j)
\end{align*}
using the symmetry of $A^{\nu}$ and the fact that the exterior derivative is the skew-symmetrization of the covariant derivative. Thus we see that $X_{\mu}$ is Lagrangian if and only if $\mu$ is closed.
\end{proof}

We now ask when $X_{\mu}$ is special Lagrangian. The result of Theorem~\ref{SLthm} below is quite complicated, but we make several observations about the theorem immediately following its proof.
\begin{thm} \label{SLthm}
Suppose $d\mu = 0$, so that $X_{\mu}$ is Lagrangian. Let $B$ be the \emph{symmetric} matrix
\begin{equation*}
B_{ij} = (\nabla_{e_i} \mu)(e_j) = \frac{1}{2} \left( (\nabla_{e_i} \mu)(e_j) + (\nabla_{e_j} \mu)(e_i) \right).
\end{equation*}
Thus $B$ is the matrix of the \emph{symmetrized covariant derivative} of $\mu$.  Let $\phi = \frac{\pi}{2} q - \theta$. Then $X_\mu$ is special Lagrangian with phase $e^{i \theta}$ if and only if
\begin{equation} \label{SLthmeq}
\begin{aligned}
& & \imag \left(e^{i \phi} \det(I + i B) \right) & = \, 0, \\ \text{and} & & & \\
& & \sigma_j (A^{\nu} (I + i B)^{-1} ) & = \, (-1)^j \, \sigma_j (A^{\nu} (I - i B)^{-1}) \quad \forall j = 1, \ldots, p,
\end{aligned}
\end{equation}
for every normal vector field $\nu$, with corresponding second fundamental form $A = A^{\nu}$.\end{thm}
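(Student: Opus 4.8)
The plan follows the calibration computations of~\cite{IKM}. Since $d\mu = 0$ makes $X_\mu$ Lagrangian by Proposition~\ref{SLprop}, it is special Lagrangian with phase $e^{i\theta}$ exactly when $\imag\big(e^{-i\theta}\, dz^1 \wedge \cdots \wedge dz^n\big)$ vanishes on $X_\mu$; and since the frame $E_1, \ldots, E_p, F_1, \ldots, F_q$ of~\eqref{EFvectseq}--\eqref{Evecteq2} differs from an oriented orthonormal frame of $X_\mu$ by a real matrix of positive determinant, on which the holomorphic volume form takes a value of modulus one, this amounts to asking that $dz^1 \wedge \cdots \wedge dz^n$ evaluated on $E_1, \ldots, E_p, F_1, \ldots, F_q$ have argument $\theta$ (the overall sign being a choice of orientation). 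So the first task is to compute this complex number.

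In the natural splitting $T_{(x,\alpha_x)}(T^* \R^n) \cong T_x \R^n \oplus T_x \R^n$ used above, the complex structure identifying $T^* \R^n$ with $\C^n$ sends $(v,w)$ to $v + iw$, and $dz^1 \wedge \cdots \wedge dz^n$ evaluated on $n$ vectors, written in an oriented orthonormal basis of $\R^n$ as the columns of a complex matrix $M$, equals $\detc M$. I take the orthonormal basis $e_1, \ldots, e_p, \nu_1, \ldots, \nu_q$. By~\eqref{Evecteq2}, with $d\mu = 0$ so that $(\nabla_{e_i}\mu)(e_l) = B_{il}$, the vector $E_i$ becomes $e_i + i\sum_{l=1}^p (A^\nu + B)_{il}\, e_l + i\sum_{l=1}^q (\nabla_{e_i}\mu)(\nu_l)\, \nu_l$, while $F_j = \cn^j$ becomes $i\nu_j$. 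Hence $M$ is block lower triangular, with diagonal blocks $I + i(A^\nu + B)$ (on the span of the $e_l$) and $iI_q$ (on the span of the $\nu_l$); in particular the normal components $(\nabla_{e_i}\mu)(\nu_l)$ of $\nabla\mu$ sit in the strictly lower block and do not contribute, giving
\[
(dz^1 \wedge \cdots \wedge dz^n)(E_1, \ldots, E_p, F_1, \ldots, F_q) = i^q\, \detc\big(I + i(A^\nu + B)\big).
\]
Since $e^{-i\theta}i^q = e^{i(\frac{\pi}{2}q - \theta)} = e^{i\phi}$, being special Lagrangian with phase $e^{i\theta}$ is equivalent to $\imag\big(e^{i\phi}\det(I + i(A^\nu + B))\big) = 0$ for every normal vector field $\nu$.

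It remains to unwind this single scalar condition into the system~\eqref{SLthmeq}, which is pure linear algebra. Because $B$ is real symmetric, $I + iB$ is invertible (its eigenvalues are $1 + i\beta_a \ne 0$, where the $\beta_a$ are the eigenvalues of $B$), so factoring $I + i(A^\nu + B) = (I + iB)\big(I + i(I + iB)^{-1}A^\nu\big)$, using $\det(I + XY) = \det(I + YX)$, and expanding with~\eqref{sigmakeq} gives
\[
e^{i\phi}\det\big(I + i(A^\nu + B)\big) = \sum_{k=0}^p i^k\, w\, \sigma_k\big(A^\nu(I + iB)^{-1}\big), \qquad w := e^{i\phi}\det(I + iB).
\]
Replacing $\nu$ by $t\nu$ scales $A^\nu$ to $tA^\nu$ and fixes $B$, so the condition for all normal $\nu$ becomes the vanishing, for all $t \in \R$, of a polynomial in $t$ whose $t^k$ coefficient is $\imag\big(i^k w\, \sigma_k(A^\nu(I + iB)^{-1})\big)$. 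The $k = 0$ term gives $\imag(w) = 0$, so $w$ is real (and nonzero, being a product of $e^{i\phi}$ with $\prod_a (1 + i\beta_a) \ne 0$), which is the first equation of~\eqref{SLthmeq}; for $k \ge 1$, using that $w$ is real and nonzero the $t^k$ term reduces to $\imag\big(i^k \sigma_k(A^\nu(I + iB)^{-1})\big) = 0$, that is, $\overline{\sigma_k(A^\nu(I + iB)^{-1})} = (-1)^k \sigma_k(A^\nu(I + iB)^{-1})$. Finally, $\sigma_k$ has real coefficients and $A^\nu$ is real, so $\overline{\sigma_k(A^\nu(I + iB)^{-1})} = \sigma_k\big(A^\nu\overline{(I + iB)^{-1}}\big) = \sigma_k\big(A^\nu(I - iB)^{-1}\big)$, turning the last display into the remaining family of equations in~\eqref{SLthmeq}.

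The only genuinely delicate part is the first step: getting the identification $T^* \R^n \cong \C^n$, the ordering of the orthonormal frame, and the orientation conventions right so that the factor $i^q$ — and hence the shift $\phi = \frac{\pi}{2}q - \theta$ — appears correctly, and verifying that $M$ really is block triangular so that the terms $(\nabla_{e_i}\mu)(\nu_l)$ drop out. Everything afterward — the factorization through $I + iB$, the expansion via~\eqref{sigmakeq}, the polarization in $t$, and the conjugation identity for $\sigma_k$ — is routine. As a consistency check, setting $\mu = 0$ gives $B = 0$ and $w = e^{i\phi}$, so the first equation forces $e^{i\phi} = \pm 1$ and the remaining equations collapse to $\sigma_j(A^\nu) = (-1)^j \sigma_j(A^\nu)$, i.e.\ $\sigma_j(A^\nu) = 0$ for odd $j$ — precisely the Harvey--Lawson austerity condition for the conormal bundle, as expected.
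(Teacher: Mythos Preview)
Your proof is correct and follows essentially the same route as the paper: compute $\Omega$ on the frame $E_1,\ldots,E_p,F_1,\ldots,F_q$ to obtain $i^q\det(I+i(A^\nu+B))$, use the scaling $\nu\mapsto t\nu$ (equivalently the paper's $(t_1,\ldots,t_q)\mapsto(st_1,\ldots,st_q)$) to reduce to a polynomial identity in $t$, and read off the coefficients. The only difference is cosmetic: you factor $\det(I+i(A^\nu+B))=\det(I+iB)\sum_k i^k\sigma_k(A^\nu(I+iB)^{-1})$ directly and then take coefficients, whereas the paper first expands in $\sigma_k(B+sA)$ and then invokes its Appendix Lemma~\ref{appendixlemma} (whose proof is exactly your factorization) together with an explicit $e^{i\phi}(\cdots)-e^{-i\phi}(\cdots)$ in place of your conjugation argument.
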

\begin{proof}
Since a basis for the $(1,0)$ forms is given by $\be^j + i \ce_j$ for $j = 1, \ldots, p$ and $\bn^k + i \cn_k$ for $k = 1, \ldots, q$, the holomorphic $(n,0)$ form $\Omega$ on $T^* \R^n$ is
\begin{equation} \label{Omegaeq}
\Omega = (\be^1 + i \ce_1) \wedge \ldots \wedge(\be^p + i \ce_p) \wedge (\bn^1 + i \cn_1) \wedge \ldots \wedge (\bn^q + i \cn_q).
\end{equation}
From~\eqref{EFvectseq} and~\eqref{Evecteq2}, we have 
\begin{align*}
(\be^j + i \ce_j)(E_i) & = \, \delta_{ji} + i \lambda_i A_{ji} + i (\nabla_{e_i} \mu)(e_j), & 
(\bar{e}^j + i\check{e}_j) (F_i) & = \, 0, \\
(\bar{\nu}^j + i \check{\nu}_j)(E_i) & = \, i (\nabla_{e_i} \mu)(\nu_j) , & 
(\bar{\nu}^j + i \check{\nu}_j) (F_i) & = \. i \delta_{ji}.
\end{align*}
Thus by~\eqref{Omegaeq}, we have
\begin{equation} \label{Omegatempeq}
\Omega(E_1,...,E_p,F_1,...,F_q) = i^q \det( \delta_{ji} + i A_{ji} + i  \nabla_{e_i} \mu(e_j)) = i^q \det ( I+i (A + B)).
\end{equation}
As in  \cite{HL}, changing the point $(t_1,...,t_q)$ to $(st_1,...,st_q)$ results in changing $A$ to $sA$. Now for $X_{\mu}$ to be special Lagrangian in $\C^n$ with phase $e^{i \theta}$, we need $\rest{\imag(e^{-i\theta} \Omega)}{X_{\mu}} = 0$, and hence at each point $x$ in $L$, and for each normal direction $\nu$, we must have that
\begin{equation} \label{fseq}
f(s) \, = \, \imag \left(e^{-i \theta} i^q \det (I+i(sA + B) ) \right) = 0 \quad \forall \, s.
\end{equation}

\begin{rmk} \label{phasermk}
Before concluding the proof, we should comment on the possible phase $e^{i\theta}$. In the case considered by Harvey--Lawson~\cite{HL} and reviewed in~\cite{IKM}, we had $\mu = 0$, and thus $B = 0$. Since the real part of $\det(I + isA)$ is always nonzero for any $s$, in this situation we must take $e^{i\theta} = \pm i^q$. (The minus sign just corresponds to a change of orientation.) However, in the general case $B \neq 0$,  the constant term (corresponding to $s=0$) in $\det(I + i(sA + B))$ is $\det(I + iB) = \sum_{k=0}^p i^k \sigma_k(B)$, and it is no longer true in general (if $p \geq 2$) that the real part of this is always nonzero. In fact, for any choice of phase $e^{i\theta}$, we can get a differential equation for $\mu$ alone by setting $f(0) = 0$ which must be satisfied.
\end{rmk}

Returning to the proof, let $e^{-i\theta} i^q = e^{i \phi}$, where $\phi = \frac{\pi}{2} q - \theta$. By equation~\eqref{fseq} we need
\begin{equation*}
e^{i \phi} \det( I + i (B + s A) ) - e^{-i \phi} \det( I - i (B + s A) ) \, = \, 0 \quad \forall \, s,
\end{equation*}
which by~\eqref{sigmakeq} becomes
\begin{equation*}
e^{i\phi} \sum_{k=0}^p (i)^k \sigma_k (B+sA) - e^{-i\phi} \sum_{k=0}^p (-i)^k \sigma_k (B+sA) \, = 0 \, \quad \forall \, s.
\end{equation*}
Because this is a $p^{\text{th}}$ order polynomial in $s$, it vanishes identically if and only if the first $p$ derivatives in $s$, at $s=0$, all vanish. Since $B$ is symmetric, the eigenvalues are real, and thus $I \pm i B$ is always invertible. Hence we can apply Lemma~\ref{appendixlemma} to the above expression with $t = \pm i$, and obtain
\begin{equation*}
e^{i \phi} j! \, i^j \det(I + iB) \, \sigma_j (A (I + i B)^{-1} ) \, = \, e^{-i \phi} j! \, (-i)^j \det(I - i B) \, \sigma_j (A (I - i B)^{-1}) \quad \forall j = 0, \ldots, p,
\end{equation*}
which simplifies to
\begin{equation} \label{SLtempeq2}
e^{2 i \phi} \det(I + iB) \, \sigma_j (A (I + i B)^{-1} ) \, = \, (-1)^j \det(I - i B) \, \sigma_j (A (I - i B)^{-1}) \quad \forall j = 0, \ldots, p.
\end{equation}
Now let $j=0$ in~\eqref{SLtempeq2}. Since $\sigma_0(C) = 1$ for any $C$, we get
\begin{equation} \label{SLtempeq3}
e^{2i \phi} \det(I + i B) = \det(I - i B),
\end{equation}
which, substituted back into~\eqref{SLtempeq2}, gives
\begin{equation*}
\sigma_j (A (I + i B)^{-1} ) \, = \, (-1)^j \, \sigma_j (A (I - i B)^{-1}) \quad \forall j = 1, \ldots, p,
\end{equation*}
the second part of~\eqref{SLthmeq}. Finally, equation~\eqref{SLtempeq3}, corresponding to $j=0$, can also be rewritten as
\begin{equation*}
2i \, \imag \left(e^{i \phi} \det(I + i B) \right) \, = \, e^{i \phi} \det(I + i B) - e^{-i \phi} \det(I - i B) \, = \, 0,
\end{equation*}
and is the first part of~\eqref{SLthmeq}.
\end{proof}

Let us make some observations about Theorem~\ref{SLthm}.

(\emph{The case $\mu=0$.}) If $\mu = 0$, then $B = 0$, and~\eqref{SLthmeq} reduces to $e^{i\phi} = \pm 1$ and
\begin{equation*}
\sigma_j (A) = (-1)^j \, \sigma_j (A) \quad \forall j = 1, \ldots, p.
\end{equation*}
That is, we recover the result of Harvey--Lawson~\cite{HL} that the conormal bundle $N^* L$ is special Lagrangian in $T^* \R^n$ with phase $i^q$ if and only if all the odd degree symmetric polynomials in the eigenvalues of $A^{\nu}$ vanish for all normal vector fields $\nu$ on $L$. Such a submanifold $L$ of $\R^n$ is called \emph{austere}.

\bigskip

Now consider the second part of~\eqref{SLthmeq}. When $j = p$, using $\sigma_p = \det$ and the first part of~\eqref{SLthmeq}, we get
\begin{equation} \label{jpeq}
e^{2 i \phi} \det(A^{\nu}) \, = \, (-1)^p \det(A^{\nu})
\end{equation} 
for every normal vector field $\nu$. In fact this can also be seen from~\eqref{SLtempeq2} using the multiplicativity of the determinant. Since the right hand side of~\eqref{jpeq} is real, we see that, unless \emph{every} $A^{\nu}$ is singular, we must have $e^{i \phi} \in \{ \pm 1, \pm i \}$, and depending on the parity of $p$ and whether $e^{2i\phi}$ is $+1$ or $-1$, this either gives no information or tells us that indeed, each $A^{\nu}$ is singular. Meanwhile the first part of~\eqref{SLthmeq} can be rewritten as
\begin{equation} \label{jzeroeq}
\sin \phi \left( 1 - \sigma_2 (B) + \sigma_4 (B) - \sigma_6 (B) + \cdots \right) \, = \, \cos \phi \left( \sigma_1 (B) - \sigma_3 (B) + \sigma_5 (B) - \cdots \right).
\end{equation}
This equation is formally \emph{identical} to the equation satisfied by a special Lagrangian graph in $\C^n$, derived by Harvey--Lawson~\cite{HL}. Note that
\begin{equation*}
\sigma_1(B) \, = \, \sum_{k=1}^p (\nabla_{e_k} \mu)(e_k) \, = \, - d^* \mu,
\end{equation*}
so $\sigma_1(B) = 0$ is precisely the condition that the closed $1$-form $\mu$ be \emph{coclosed}, and hence harmonic.

We can also simplify the second part of~\eqref{SLthmeq} when $j=1$ as follows. Without loss of generality, we can assume that our oriented orthonormal local frame $\{ e_1, \ldots, e_p \}$ for $L$ has been chosen so that the symmetric matrix $B$ is diagonal: $B_{kl} = \lambda_k \delta_{kl}$, with real eigenvalues $\lambda_k$. Then~\eqref{SLthmeq} for $j=1$ becomes:
\begin{align*}
\tr (A(I + i B)^{-1}) & = \, -\tr (A(I - i B)^{-1}) \\ \sum_{k=1}^p \sum_{l=1}^p A_{kl} (1 + i \lambda_l \delta_{lk})^{-1} & = \, - \sum_{k=1}^p \sum_{l=1}^p A_{kl} (1 - i \lambda_l \delta_{lk})^{-1},
\end{align*}
which can be easily rearranged to obtain
\begin{equation} \label{joneeq}
\sum_{k=1}^p \frac{A_{kk}}{ 1 + \lambda_k^2} \, = \, 0.
\end{equation}
Similar expressions can be obtained for the second part of~\eqref{SLthmeq} when $j=2, \ldots, p-1$ using the fact that $\sigma_j(C) = \tr(\Lambda^j C)$, but these expressions are not particularly enlightening.

\bigskip

(\emph{The case $p=1$.}) When $p=1$ ($L$ is a curve), and in any codimension $q$, equations~\eqref{SLthmeq} reduce to only~\eqref{jpeq} and~\eqref{jzeroeq} with $p=1$. These become
\begin{equation*}
e^{2i \phi} A^{\nu} \, = \, - A^{\nu}, \qquad \qquad \sin \phi \, = \, - \cos \phi \, d^* \mu.
\end{equation*}
The case $e^{i\phi} = \pm i$ gives the contradiction $1 = 0$, so we must have $A^{\nu} = - A^{\nu}$, and $d^* \mu = -\tan \phi$. So $L^1$ is a minimal $1$-dimensional submanifold of $\R^n$ (hence a straight line) and $\mu$ is a closed $1$-form on $L \cong \R$ satisfying $\Delta_L \mu = d d^* \mu + d^* d \mu = -d (\tan \phi) + d^*(0) = 0$, since $\phi$ is constant. In fact, since $H^1(\R) = 0$, we know that $\mu = df$ for some function $f$ on $L$ satisfying $\Delta_L f = \tan \phi$. If we choose coordinates on $\R^n$ so that $L^1$ is just the $x = x^1$ axis, then $\mu = (a x + b) dx$ for some constants $a$ and $b$, and thus $X_{\mu} = $ ``$N^* L + \mu$'' is just an affine translation of $N^* L$ in $\C^n = \R^n \oplus \R^n$, and is thus an $n$-plane.

\bigskip

(\emph{The case $p=2$.}) When $p=2$ ($L$ is a surface), and in any codimension $q$, equations~\eqref{SLthmeq} reduce to~\eqref{jpeq},~\eqref{jzeroeq}, and~\eqref{joneeq} with $p=2$. These become
\begin{equation*}
e^{2i \phi} \det A^{\nu} \, = \, \det A^{\nu}, \qquad \sin \phi \, (1 - \sigma_2 (B) ) \, = \, \cos \phi \, \tr (B), \qquad \frac{A^{\nu}_{11}}{1 + \lambda_1^2} + \frac{A^{\nu}_{22}}{1 + \lambda_2^2} \, = \, 0.
\end{equation*}
If some $\det A^{\nu} \neq 0$, then $e^{i \phi} = \pm 1$, and thus $d^* \mu = -\tr(B) = 0$. Hence $\lambda_1 = - \lambda_2$, and then the third equation above gives $\tr A^{\nu} = 0$, for any $\nu$, and hence $L$ is a minimal surface in $\R^n$, and $\mu$ is a harmonic $1$-form on $L$. These are the only conditions. On the other hand, if every $\det A^{\nu} = 0$, then the phase $e^{i \phi}$ can be arbitrary, and the second equation above becomes $1 - \sigma_2 (B) = \cot \phi \, \tr (B)$, which is much more complicated. Given a solution of this equation, we can then substitute back into the third equation above to find conditions on the second fundamental form of $L$ in $\R^n$.

We can summarize part of the above discussion as follows.
\begin{cor} \label{SLptwocor}
When $p=2$, then $X_{\mu}$ is special Lagrangian with phase $i^q$ if and only if $L$ is minimal in $\R^n$ and $\mu$ is a \emph{harmonic} $1$-form on $L$.
\end{cor}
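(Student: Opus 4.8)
The plan is to specialize Theorem~\ref{SLthm} to the case $p = 2$ and to the phase $e^{i\theta} = i^q$. By the definition $\phi = \tfrac{\pi}{2} q - \theta$, the choice $e^{i\theta} = i^q$ corresponds to $\phi = 0$, so $e^{i\phi} = 1$, $\sin \phi = 0$, $\cos \phi = 1$; this is consistent with the $\mu = 0$ discussion above, where~\eqref{SLthmeq} forced $e^{i\phi} = \pm 1$ for the conormal bundle. As already noted in the text, for $p = 2$ the system~\eqref{SLthmeq} is equivalent to the three equations~\eqref{jpeq},~\eqref{jzeroeq}, and~\eqref{joneeq} (its $j = 2$, $j = 0$, and $j = 1$ instances), so it suffices to see what these become at $\phi = 0$.

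For the ``only if'' direction: if $X_\mu$ is special Lagrangian with phase $i^q$ then it is in particular Lagrangian, so $d\mu = 0$ by Proposition~\ref{SLprop}, and Theorem~\ref{SLthm} applies with $\phi = 0$. Now~\eqref{jpeq} reads $\det A^\nu = \det A^\nu$ and carries no information; \eqref{jzeroeq} reads $0 = \sigma_1(B) = - d^* \mu$, which together with $d\mu = 0$ says that $\mu$ is a harmonic $1$-form on $L$; and choosing the orthonormal frame so that $B = \operatorname{diag}(\lambda_1, \lambda_2)$, the vanishing of $\tr B = \lambda_1 + \lambda_2$ forces $\lambda_2 = -\lambda_1$, hence $1 + \lambda_1^2 = 1 + \lambda_2^2$, so~\eqref{joneeq} collapses to $\tr A^\nu = A^\nu_{11} + A^\nu_{22} = 0$ for every normal vector field $\nu$, i.e.\ $L$ is minimal in $\R^n$. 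Note that fixing $\phi = 0$ makes the dichotomy $\det A^\nu = 0$ versus $\det A^\nu \neq 0$ from the general $p = 2$ discussion irrelevant, since~\eqref{jzeroeq} now yields $\tr B = 0$ unconditionally.

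For the ``if'' direction I would reverse these steps: if $L$ is minimal and $\mu$ is harmonic, then $d\mu = 0$ makes $X_\mu$ Lagrangian by Proposition~\ref{SLprop}; $d^* \mu = 0$ gives $\sigma_1(B) = \tr B = 0$, hence $\lambda_2 = -\lambda_1$ and $1 + \lambda_1^2 = 1 + \lambda_2^2$ in a frame diagonalizing $B$. Then~\eqref{jpeq} holds trivially at $\phi = 0$, equation~\eqref{jzeroeq} holds because $\sigma_1(B) = 0$, and~\eqref{joneeq} holds because $A^\nu_{11} + A^\nu_{22} = \tr A^\nu = 0$. Thus all of~\eqref{SLthmeq} holds with $\phi = 0$, so $X_\mu$ is special Lagrangian with phase $i^q$.

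There is no genuine obstacle here: the corollary is a direct reading-off of the $p = 2$ specialization already performed in the text. The only points requiring care are the phase bookkeeping --- confirming that ``phase $i^q$'' is exactly $\phi = 0$, consistently with the $\mu = 0$ case --- and the elementary linear-algebra fact that a traceless symmetric $2 \times 2$ matrix satisfies $\lambda_1^2 = \lambda_2^2$, which is precisely what allows~\eqref{joneeq} to reduce to the vanishing of the trace of the second fundamental form.
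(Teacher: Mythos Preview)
Your proposal is correct and follows essentially the same approach as the paper: the corollary is presented there as a summary of the preceding $p=2$ discussion, and your argument is precisely that discussion specialized to $\phi=0$, with both directions spelled out explicitly. If anything, your treatment is slightly cleaner than the paper's, since fixing $\phi=0$ from the outset (as you note) renders the $\det A^{\nu}=0$ versus $\det A^{\nu}\neq 0$ case split unnecessary.
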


\begin{rmk} \label{compareborisenkormk}
The results in this section are extensions of the work of Borisenko~\cite{Bo}. He considered only the special case when $\mu = d\rho$ is \emph{exact}, and $n=3$, $p=2$, with fixed phase $i^q = i$.
\end{rmk}

\begin{rmk} \label{KMquestionrmk}
From the above discussion, it appears likely that there exist solutions to~\eqref{SLthmeq} in which $L$ is not austere in $\R^n$. This would give a \emph{negative} answer to ``Question 5.3'' in~\cite{KM}, for the special Lagrangian case. See also the brief discussion in Section~\ref{conclusionsec} for more about this.
\end{rmk}

\section{Analogous constructions for the exceptional calibrations} \label{exceptionalsec}

In this section we present similar constructions of calibrated submanifolds of $\R^7$ and $\R^8$ which are deformations of total spaces of vector bundles, obtained by ``twisting'' the constructions of~\cite{IKM}.

\subsection{Associative and coassociative submanifolds of $\R^7$} \label{g2sec}

We begin by reviewing (see~\cite{IKM}), that $\Lambda ^2_- (\R^4) \cong \R^7$ has a canonical torsion-free $\G$-structure $\ph$. Let $\{ e^1, e^2, e^3, e^4 \}$ be any local oriented coframe for $\R^4$. Then each fibre of $\Lambda ^2_- (\R^4)$ is spanned by 
\begin{align*}
\omega^1 & = \, e^1 \wedge e^2 - e^3 \wedge e^4, \\
\omega^2 & = \, e^1 \wedge e^3 - e^4 \wedge e^2, \\
\omega^3 & = \, e^1 \wedge e^4 - e^2 \wedge e^3.
\end{align*}
An oriented orthonormal frame for the total space $\Lambda^2_-(\R^4)$ is given by
\begin{equation*}
(e_i , 0) \qquad \text{ for } i = 1, \ldots, 4 \qquad \text{ and } \qquad (0 , \omega^i) \qquad \text{ for } i = 1, \ldots, 3.
\end{equation*}
To simplify the notation, we will denote $(e_i, 0)$ by $\be_i$ and $(0, \omega^i)$ by $\co^i$. The canonical $\G$-structure $\ph$ on $\Lambda^2_- (\mathbb R^4)$ is then given by
\begin{equation} \label{g2formeq}
\begin{aligned}
\ph \, = \, \, & \co_1 \wedge \co_2 \wedge \co_3 + \co_1 \wedge ( \be^1 \wedge \be^2 - \be^3 \wedge \be^4 ) \\ & {}+ \co_2 \wedge ( \be^1 \wedge \be^3 - \be^4 \wedge \be^2 ) + \co_3 \wedge ( \be^1 \wedge \be^4 - \be^2 \wedge \be^3 )
\end{aligned}
\end{equation}
where $\co_k$ is dual to $\co^k$ and $\be^k$ is dual to $\be_k$.

Now we restrict the bundle $\Lambda^2_-(\R^4)$ to an oriented surface $L^2$ in $\R^4$, with $\{e^1, e^2\}$ an oriented local coframe for $L$, and $\{e^3, e^4 \} = \{\nu^1, \nu^2\}$ an oriented local frame for the conormal bundle. We also assume that at a fixed point $x \in L$, the frames have been chosen to satisfy the equations~\eqref{assumptioneq}.
\begin{prop} \label{selfdualformulasprop}
In such an adapted local frame, at the point $x$, we have
\begin{align*}
\nabla_{e_i} \omega^1 & = \, (A^2 _{i1} - A^1_{i2}) \omega^2 + (-A^1_{i1} - A^2_{i2}) \omega^3, \\
\nabla_{e_i} \omega^2 & = \, (A^1 _{i2} - A^2 _{i1}) \omega^1, \\
\nabla_{e_i} \omega^3 & = \, (A^2_{i2} + A^1_{i1}) \omega^1.
\end{align*}
\end{prop}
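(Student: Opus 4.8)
The plan is a direct computation. The key point is that the Levi-Civita connection $\nabla$ on $\R^4$ is flat and extends to a derivation on the exterior algebra, so that $\nabla_{e_i}$ acts on each decomposable $2$-form by the Leibniz rule. The only inputs needed are the formulas~\eqref{secondfundformeqs}: in the adapted frame, at the point $x$, with $\{\nu^1, \nu^2\} = \{e^3, e^4\}$ and $p = q = 2$, they give $\nabla_{e_i} e^1 = -A^1_{i1} e^3 - A^2_{i1} e^4$, $\nabla_{e_i} e^2 = -A^1_{i2} e^3 - A^2_{i2} e^4$, $\nabla_{e_i} e^3 = A^1_{i1} e^1 + A^1_{i2} e^2$, and $\nabla_{e_i} e^4 = A^2_{i1} e^1 + A^2_{i2} e^2$ at $x$.

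First I would write $\omega^1 = e^1 \wedge e^2 - e^3 \wedge e^4$, $\omega^2 = e^1 \wedge e^3 + e^2 \wedge e^4$, $\omega^3 = e^1 \wedge e^4 - e^2 \wedge e^3$, apply $\nabla_{e_i}$ termwise, substitute the four formulas above, and expand each of the resulting eight wedge products. Then I would reorder all factors into the standard ordered basis $\{e^a \wedge e^b : a < b\}$ of $\Lambda^2(\R^4)$, collect coefficients, and recognize the result: in $\nabla_{e_i}\omega^1$ the terms group into an $(A^2_{i1} - A^1_{i2})\,\omega^2$ piece and a $(-A^1_{i1} - A^2_{i2})\,\omega^3$ piece, while $\nabla_{e_i}\omega^2$ and $\nabla_{e_i}\omega^3$ each collapse to a single multiple of $\omega^1$, with the stated coefficients. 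This re-assembly step is where the claimed formulas emerge.

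As built-in consistency checks I would observe two structural facts. First, $\nabla$ commutes with the Hodge star on flat $\R^4$, so $\nabla_{e_i}$ preserves the anti-self-dual subbundle $\Lambda^2_-(\R^4) = \spn\{\omega^1, \omega^2, \omega^3\}$; this explains why no self-dual $2$-forms appear in the answer. Second, since $\{\omega^1, \omega^2, \omega^3\}$ is an orthogonal frame of constant pointwise norm, the connection matrix of $\nabla_{e_i}$ in this frame must be skew-symmetric, which is visibly consistent with the stated formulas (the $\omega^2$--$\omega^3$ entry vanishes, and the $\omega^1$--$\omega^2$ and $\omega^1$--$\omega^3$ entries are negatives of each other). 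There is no genuine obstacle here; the only care required is bookkeeping of signs, both from reordering wedge factors and from the Harvey--Lawson sign convention for the second fundamental form noted in Remark~\ref{signconventionrmk}, since an error here would propagate into every later computation that invokes this Proposition.
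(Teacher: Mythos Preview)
Your proposal is correct and is exactly the natural direct computation; the paper itself does not supply a proof but simply refers to~\cite[Proposition~4.1.2]{IKM}, where this Leibniz-rule expansion is carried out. Your consistency checks (preservation of $\Lambda^2_-$ and skew-symmetry of the connection matrix) are apt and would catch any sign slip.
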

\begin{proof}
See~\cite[Proposition 4.1.2]{IKM}.
\end{proof}
Notice that, when restricted to $L^2$, we have $\omega^1 = \vol_L - \st_{\R^4} \vol_L$ is a globally defined nowhere vanishing section of $\rest{\Lambda^2_-(\R^4)}{L}$. Hence $\rest{\Lambda^2_-(\R^4)}{L}$ can be decomposed as $E \oplus F$, where $E$ is the real line bundle over $L$ spanned by $\omega^1$, and $F = E^{\perp}$ is a rank $2$ real vector bundle over $L$ locally spanned by $\omega^2$ and $\omega^3$. In~\cite{IKM}, it is proved that the total space of $F$ is \emph{associative} in $\R^7$, and the total space of $F$ is \emph{coassociative} in $\R^7$, if and only if $L$ is minimal or negative superminimal in $\R^4$, respectively.

Following the strategy of Section~\ref{borisenkosec}, it is natural to consider the following. Let $\sigma$ be a section of the bundle $F$ over $L$. Define $X_{\sigma}$ by
\begin{equation} \label{Xsigmadefneq}
X_{\sigma} \, = \, \{(x, \eta + \sigma_x) \in \rest{\Lambda^2_-(\R^4)}{L} : \, x \in L, \, \eta \in E_x \}.
\end{equation}
As in Section~\ref{borisenkosec}, this is a ``twisting'' of the bundle $E$ over $L$ obtained by affinely translating each fibre $E_x$ by a vector $\sigma_x$, in the orthogonal complement $F_x$, which varies with $x \in L$. We will mnemonically write $X_{\sigma}$ as ``$E + \sigma$''. We want to find conditions on the immersion of $L$ in $\R^4$ and on the section $\sigma$ of $F$ so that $X_{\sigma}$ is an associative submanifold of $\R^7$.

Before stating our theorem, we make the following observations. First, we note that $L^2$ is an oriented Riemannian $2$-manifold, and hence is a complex $1$-dimensional K\"ahler manifold, with complex structure $J$ defined locally by $J e^1 = e^2$ and $J e^2 = - e^1$. Also, since $F$ is a rank $2$ real vector bundle over $L$, with an orientation given by $\{ \omega^2, \omega^3 \}$, it is actually a rank $1$ \emph{complex} vector bundle over $L$, with complex structure given locally by $J \omega^2 = \omega^3$ and $J \omega^3 = - \omega^2$. Since $L$ is complex one dimensional, this $F$ is actually a \emph{holomorphic} line bundle (as there are no $(0,2)$-forms.)

\begin{thm} \label{assthm}
The submanifold $X_{\sigma}$ is an associative submanifold of $\R^7 \cong \Lambda^2_- (\R^4)$ if and only if $L$ is minimal in $\R^4$ and $\sigma$ is a \emph{holomorphic section} of $F$.
\end{thm}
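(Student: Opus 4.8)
The plan is to follow the method used for Proposition~\ref{SLprop} and Theorem~\ref{SLthm} (and in~\cite{IKM}): parametrize $X_\sigma$ explicitly, write down a basis of its tangent spaces using Proposition~\ref{selfdualformulasprop}, and then impose the associative condition pointwise. In local coordinates $(u^1,u^2)$ on $L$ and a fibre coordinate $t$ for $E$ with respect to the trivialization $\omega^1$, and writing $\sigma = \sigma_2 \omega^2 + \sigma_3 \omega^3$ for smooth functions $\sigma_2, \sigma_3$ on $L$, the immersion of $X_\sigma$ in $\R^7 = \R^4 \oplus \Lambda^2_-(\R^4)$ is
\[
h : (u^1, u^2, t) \longmapsto \bigl( x(\mathbf u),\ t\,\omega^1 + \sigma_2 \omega^2 + \sigma_3 \omega^3 \bigr).
\]
Differentiating and using Proposition~\ref{selfdualformulasprop} at a point $x \in L$ where the adapted frame satisfies~\eqref{assumptioneq}, one gets a basis $\{E_1, E_2, F_0\}$ of $T_h X_\sigma$ with $F_0 = \co^1$ and
\[
E_i \;=\; \be_i \,+\, a_i\,\co^1 \,+\, b_i\,\co^2 \,+\, c_i\,\co^3, \qquad i = 1, 2,
\]
where $a_i = \sigma_2(A^1_{i2} - A^2_{i1}) + \sigma_3(A^1_{i1} + A^2_{i2})$ is independent of $t$, while $b_i = t(A^2_{i1} - A^1_{i2}) + e_i(\sigma_2)$ and $c_i = -t(A^1_{i1} + A^2_{i2}) + e_i(\sigma_3)$ are affine in $t$. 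A key preliminary observation is that, since Proposition~\ref{selfdualformulasprop} shows $\nabla_{e_i}\omega^2$ and $\nabla_{e_i}\omega^3$ lie in $E$, the frame $\{\omega^2, \omega^3\}$ of $F$ is parallel at $x$ for the induced connection; hence, with the complex structures $J e^1 = e^2$ on $L$ and $J\omega^2 = \omega^3$ on $F$, the section $\sigma$ is holomorphic at $x$ precisely when $e_1(\sigma_2) = e_2(\sigma_3)$ and $e_1(\sigma_3) = -e_2(\sigma_2)$.

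Next I would use the standard criterion that a $3$-plane in $\R^7 \cong \Lambda^2_-(\R^4)$ is associative if and only if it is closed under the cross product $\times$ determined by $\ph$; equivalently, $X_\sigma$ is associative if and only if $E_1 \times F_0$, $E_2 \times F_0$ and $E_1 \times E_2$ all lie in $\spn\{E_1, E_2, F_0\}$. Compute these three cross products using the multiplication table for $\Lambda^2_-(\R^4)$ in the Appendix; the relevant facts are that $\{\co^1, \co^2, \co^3\}$ multiply quaternionically, that $\co^1 \times \be_k$ is the action of the complex structure $\omega^1$ on $\R^4$, and that $\be_j \times \be_k = \pm \co^i$. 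The decisive point is that every element of $\spn\{E_1, E_2, F_0\}$ has vanishing $\be_3$- and $\be_4$-components, since these normal directions do not occur in $E_1, E_2, F_0$. Requiring the $\be_3, \be_4$-components of $E_1 \times F_0$ to vanish gives exactly the two equations $b_2 + c_1 = 0$ and $b_1 - c_2 = 0$; separating the $t$-linear and $t$-independent parts (using the symmetry $A^k_{ij} = A^k_{ji}$), the former read $\tr A^1 = \tr A^2 = 0$, i.e. $L$ is minimal in $\R^4$, and the latter are exactly the Cauchy--Riemann system above, i.e. $\sigma$ is a holomorphic section of $F$. One then checks that $E_2 \times F_0$ yields no new conditions, and that, granted $b_2 + c_1 = b_1 - c_2 = 0$, all components of $E_1 \times E_2$ automatically fall into the span; so these are the only conditions, establishing both directions of the theorem at once. (As a sanity check, setting $\sigma = 0$ reduces the conditions to $L$ minimal, recovering the untwisted result of~\cite{IKM} that the total space of $E$ is associative iff $L$ is minimal.)

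I expect the main obstacle to be purely computational bookkeeping: carrying signs correctly through the $\G$ multiplication table in all three cross products, and then matching the resulting $t$-independent equations with the $\bar\partial$-operator of the holomorphic line bundle $F$ under the chosen conventions for $J$ on $L$ (via $Je^1 = e^2$) and on $F$ (via $J\omega^2 = \omega^3$), since a sign slip there would spoil the identification with holomorphicity. A secondary point requiring care is the justification, via Proposition~\ref{selfdualformulasprop}, that the connection induced on $F$ vanishes at the adapted point $x$, so that holomorphicity of $\sigma$ really does reduce to the Cauchy--Riemann system with no extra connection terms; and the observation that imposing the associative condition at a single arbitrary $x \in L$ for all fibre values $t$ is sufficient, which is precisely what permits separating powers of $t$.
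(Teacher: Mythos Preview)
Your plan is sound and essentially parallels the paper: same parametrization, same tangent vectors $E_i = \be_i + a_i\co^1 + b_i\co^2 + c_i\co^3$ and $F_0 = \co^1$ via Proposition~\ref{selfdualformulasprop}, and the same identification of the resulting conditions with minimality and with the Cauchy--Riemann system for $\sigma$ (your argument that $\nabla^F$ vanishes at the adapted point is exactly what the paper uses in its equation~\eqref{holomorphiceq}). The one genuine difference is the associativity test: the paper computes the single octonionic \emph{associator} $[E_1,E_2,F_1] = (E_1E_2)F_1 - E_1(E_2F_1)$, finds it equals $2(c_2-b_1)\oje - 2(b_2+c_1)\oke$, and reads off the conditions directly; you instead propose checking closure under the cross product. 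Both are valid characterizations of an associative $3$-plane, and both lead to the same pair of equations $b_1 = c_2$, $b_2 = -c_1$. The associator route is a single computation rather than three, which is why the paper prefers it.

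One small correction to your plan: the cross product $E_1 \times F_0$ has \emph{no} $\be_3$- or $\be_4$-components (indeed $\oee \times \oi = -\oie$, $\oj \times \oi = -\ok$, $\ok \times \oi = \oj$, so $E_1 \times F_0 = -\be_2 + c_1\co^2 - b_1\co^3$). The constraints $b_2+c_1 = 0$ and $b_1-c_2 = 0$ arise either from matching this to $-E_2$ modulo $F_0$ in the $\co^2,\co^3$ slots, or, more in line with your stated reasoning, from the $\be_3,\be_4$-components of $E_1 \times E_2$, which are $-(b_2+c_1)$ and $(b_1-c_2)$ respectively. So in your write-up swap the roles of $E_1 \times F_0$ and $E_1 \times E_2$: it is the latter whose normal components directly give the equations, after which closure of $E_i \times F_0$ follows.
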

\begin{proof}
We need to check when every tangent space to $X_{\sigma}$ is an associative subspace of the corresponding tangent space to $\Lambda^2_- (\R^4)$. In local coordinates the immersion $h$ is
\begin{align*}
h : (u^1, u^2, t_1) \quad \mapsto \quad & (x^1(u^1, u^2), x^2(u^1, u^2), \, t_1 \omega^1 + \sigma(u^1, u^2) ) \\ = \quad & (x^1(u^1, u^2), x^2(u^1, u^2), \, t_1 \omega^1 + \alpha(u^1, u^2) \omega^2 + \beta(u^1, u^2) \omega^3).
\end{align*}
Here $\alpha$ and $\beta$ are locally defined smooth functions which are the coordinates of $\sigma$ with respect to the local trivialization $\{ \omega^2, \omega^3 \}$ of $F$. We omit the explicit dependence of each $\omega^i$ on $(u^1, u^2)$ for notational simplicity. 
Thus the tangent space to $X_{\sigma}$ at $(x (\mathbf u_0), t_1 \omega^1 + \sigma)$ is
spanned by the vectors
\begin{align*}
E_i & = \, h_* \left(\frac{\partial}{\partial u^i}\right) = \left( e_i, \, t_1 \nabla_{e_i} (\omega^1)  + \nabla_{e_i} (\alpha \omega^2 + \beta \omega^3) \right), \qquad i = 1, 2, \\ F_1 & = \, h_* \left(\frac{\partial}{\partial t_1}\right) = (0, \omega^1) = \co^1.
\end{align*}
Using Proposition~\ref{selfdualformulasprop}, we find that
\begin{equation*}
E_i = \, \be_i + a_i \co^1 +b_i \co^2 + c_i \co^3,
\end{equation*}
where
\begin{equation*}
a_i \, = \, \alpha (A^1 _{i2} - A^2 _{i1}) + \beta (A^2_{i2} + A^1_{i1}), \qquad b_i \, = \, t_1 (A^2 _{i1} - A^1_{i2}) + \alpha_i, \qquad c_i \, = \, t_1 (-A^1_{i1} - A^2_{i2}) + \beta_i,
\end{equation*}
with $\alpha_i = \frac{\partial \alpha}{\partial u^i}$ and $\beta_i = \frac{\partial \beta}{\partial u^i}$. To check when the tangent space at $(x (\mathbf u_0), t_1 \omega^1 + \sigma)$ is associative, we need to see when the octonion associator $[E_1, E_2, F_1] = (E_1 E_2)F_1 - E_1(E_2 F_1)$ vanishes, where at this point, without loss of generality, we can make the following explicit identification of the tangent space of $\Lambda^2_- (\R^4)$ with $\imag (\Oc)$:
\begin{equation*}
\begin{pmatrix} \co^1 & \co^2 & \co^3 & \be_1 & \be_2 & \bn_1 & \bn_2 \\ \updownarrow & \updownarrow & \updownarrow & \updownarrow & \updownarrow & \updownarrow & \updownarrow \\ \oi & \oj & \ok & \oee & \oie & \oje & \oke \end{pmatrix}.
\end{equation*}
Therefore we have
\begin{align*}
E_1 & = \,  \oee + a_1 \oi + b_1 \oj + c_1 \ok \\ E_2 & = \,  \oie + a_2 \oi + b_2 \oj + c_2 \ok \\ F_1 & = \, \oi.
\end{align*}
Now a tedious computation (see the octonion multiplication table in Appendix~\ref{octtablesec}) gives
\begin{align*}
[E_1,E_2,F_1] & = \, (E_1 E_2) F_1 - E_1 (E_2 F_1) \\ & = 2(c_2 - b_1) \oje - 2 (b_2 + c_1) \oke \\ & = \, -2 \left( (A^2_{11} + A^2_{22}) t_1 + (\beta_2 - \alpha_1) \right) \oje + 2 \left( ( A^1_{11} +  A^1_{22}) t_1 - (\alpha_2 + \beta_1) \right) \oke.
\end{align*}
This will vanish for all $x \in L$ and all $t_1 \in \R$ if and only if $\tr A^{\nu_1} = \tr A^{\nu_2} = 0$ (that is, $L$ is minimal) \emph{and} $\alpha_1 = \beta_2$ and $\alpha_2 = - \beta_1$. All that remains is to verify that these two equations on $\alpha$ and $\beta$ are equivalent to the holomorphicity of the section $\sigma$. Let $\nabla^F$ denote the connection on $F$ induced from $\nabla$ on $\R^4$. Since $F$ is a holomorphic line bundle, we see that
\begin{equation*}
\bar \partial_F \sigma \, = \, 0 \, \Leftrightarrow \, (\nabla^F)^{(0,1)} \sigma \, = \, 0 \, \Leftrightarrow \, (e_1 + i e_2) \left( \nabla^F \sigma \right) \, = \, 0.
\end{equation*}
Let $\pi_F$ denote the orthogonal projection from $E \oplus F$ onto $F$. We have
\begin{equation} \label{holomorphiceq}
\begin{aligned}
(e_1 + i e_2) \left( \nabla^F \sigma \right)  & = \, \nabla^F_{e_1} \sigma + J( \nabla^F_{e_2} \sigma) \\ & = \, \pi_F (\nabla_{e_1} \sigma) + J( \pi_F (\nabla_{e_2} \sigma) ) \\ & = \, \pi_F (\alpha_1 \omega^2 + \alpha \nabla_{e_1} \omega^2 + \beta_1 \omega^3 + \beta \nabla_{e_1} \omega^3) + J ( \pi_F (\alpha_2 \omega^2 + \alpha \nabla_{e_2} \omega^2 + \beta_2 \omega^3 + \beta \nabla_{e_2} \omega^3)) \\ & = \, \alpha_1 \omega^2 + \beta_1 \omega^3 + J(\alpha_2 \omega^2 + \beta_2 \omega^3) \\ & = \, (\alpha_1 - \beta_2) \omega^2 + (\beta_1 + \alpha_2) \omega^3,
\end{aligned}
\end{equation}
where we have used Proposition~\ref{selfdualformulasprop} once again. Thus the pair of equations $\alpha_1 = \beta_2$ and $\alpha_2 = - \beta_1$ are equivalent to $\bar \partial_F \sigma = 0$. This completes the proof.
\end{proof}

Similarly we can look for coassociative submanifolds by twisting the vector bundle $F$ by a section of $E$. Specifically, let $\eta$ be a section of the trivial real line bundle $E$ over $L$. Define $X_{\eta}$ by
\begin{equation} \label{Xtaudefneq}
X_{\eta} \, = \, \{(x, \eta_x + \sigma) \in \rest{\Lambda^2_-(\R^4)}{L} : \, x \in L, \, \sigma \in F_x \}.
\end{equation}
Again, this is a ``twisting'' of the bundle $F$ over $L$ obtained by affinely translating each fibre $F_x$ by a vector $\eta_x$, in the orthogonal complement $E_x$, which varies with $x \in L$. We will mnemonically write $X_{\eta}$ as ``$\eta + F$''. We want to find conditions on the immersion of $L$ in $\R^4$ and on the section $\eta$ of $E$ so that $X_{\eta}$ is a coassociative submanifold of $\R^7$.

\begin{thm} \label{coassthm}
The submanifold $X_{\eta}$ is a coassociative submanifold of $\R^7 \cong \Lambda^2_- (\R^4)$ if and only if $L$ is negative superminimal in $\R^4$ and $\tau$ is a \emph{parallel section} of $E$, with respect to the connection $\nabla^E$ on $E$ induced from $\nabla$ on $\R^4$.
\end{thm}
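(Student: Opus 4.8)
The plan is to use the standard characterization (Harvey--Lawson; see also~\cite{IKM}) that a $4$-dimensional submanifold $N$ of a $\G$-manifold is coassociative if and only if $\rest{\ph}{N} \equiv 0$. Thus we must determine when the associative $3$-form~\eqref{g2formeq} pulls back to zero on $X_{\eta}$. This is the coassociative analogue of checking the vanishing of an octonion associator, but it turns out to be considerably simpler. Following the proof of Theorem~\ref{assthm}, choose local coordinates $(u^1, u^2)$ for $L$ and fibre coordinates $(t_2, t_3)$ for $F$ with respect to the trivialization $\{\omega^2, \omega^3\}$, and write $\eta = \gamma\, \omega^1$ for a locally defined smooth function $\gamma$, so that the immersion $h$ of $X_{\eta}$ (mnemonically ``$\eta + F$'') reads
\begin{equation*}
h : (u^1, u^2, t_2, t_3) \ \mapsto \ \left( x^1(\mathbf u), x^2(\mathbf u), \ \gamma(\mathbf u)\, \omega^1 + t_2\, \omega^2 + t_3\, \omega^3 \right).
\end{equation*}

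Next I would compute a basis $\{E_1, E_2, F_2, F_3\}$ of the tangent space of $X_{\eta}$, using Proposition~\ref{selfdualformulasprop} to express $\nabla_{e_i}\omega^j$ in the adapted frame exactly as in the proof of Theorem~\ref{assthm}. Writing $E_i = \be_i + p_i \co^1 + q_i \co^2 + r_i \co^3$ and $F_2 = \co^2$, $F_3 = \co^3$, one reads off
\begin{equation*}
p_i = \gamma_i + t_2 (A^1_{i2} - A^2_{i1}) + t_3 (A^1_{i1} + A^2_{i2}), \qquad q_i = \gamma (A^2_{i1} - A^1_{i2}), \qquad r_i = -\gamma (A^1_{i1} + A^2_{i2}).
\end{equation*}
The key observation is that each of $E_1, E_2, F_2, F_3$ lies in $\spn\{\be_1, \be_2, \co_1, \co_2, \co_3\}$, with no component along $\bn_1$ or $\bn_2$; hence in the expression~\eqref{g2formeq} for $\ph$ only the monomial $\co_1 \wedge \co_2 \wedge \co_3$ survives when $\ph$ is evaluated on a triple of these vectors, since every other term of $\ph$ requires a factor along $\bn_1$ or $\bn_2$. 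Evaluating $\ph$ on the four triples drawn from $\{E_1, E_2, F_2, F_3\}$ then produces, up to sign, the quantities $p_1$, $p_2$, $p_1 q_2 - p_2 q_1$, and $p_1 r_2 - p_2 r_1$; the last two are consequences of the first two, so $\rest{\ph}{X_{\eta}} \equiv 0$ is equivalent to $p_1 = p_2 = 0$ identically in $(t_2, t_3)$.

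Finally I would read off the conditions on $L$ and $\eta$ from $p_1 \equiv p_2 \equiv 0$. Matching the coefficients of $t_2$ and $t_3$ gives $A^1_{i2} = A^2_{i1}$ and $A^1_{i1} = -A^2_{i2}$ for $i = 1, 2$; these are exactly the equations characterizing $L$ as negative superminimal in $\R^4$ --- indeed, setting $\eta = 0$ recovers precisely the conditions derived in~\cite{IKM} for the total space of $F$ itself to be coassociative. Matching the constant term gives $\gamma_1 = \gamma_2 = 0$, i.e.\ $d\gamma = 0$; and since Proposition~\ref{selfdualformulasprop} shows $\nabla_{e_i}\omega^1$ lies in $\spn\{\omega^2, \omega^3\}$, the section $\omega^1$ is parallel for $\nabla^E$, so $\nabla^E \eta = d\gamma \otimes \omega^1$ and $d\gamma = 0$ is equivalent to $\eta$ being a parallel section of $E$. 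Combining these gives the asserted equivalence.

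The main obstacle is not any single hard step: this is arguably the most elementary of the four main theorems, because the coassociative condition is the vanishing of a differential form rather than of a nonlinear associator. The points that need care are (i) confirming that $E_1, E_2$ genuinely have no $\bn_1, \bn_2$ component, which is what collapses the computation to the single monomial $\co_1 \wedge \co_2 \wedge \co_3$ in $\ph$; (ii) keeping track of which of the four pulled-back components of $\ph$ are independent; and (iii) correctly matching the resulting second fundamental form equations with the definition of ``negative superminimal'' used in~\cite{IKM}, which the untwisted case $\eta = 0$ makes transparent.
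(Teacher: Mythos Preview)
Your approach is essentially identical to the paper's proof: you write the immersion $h$, compute the tangent vectors $E_i = \be_i + p_i\co^1 + q_i\co^2 + r_i\co^3$ and $F_j = \co^j$ using Proposition~\ref{selfdualformulasprop}, evaluate $\ph$ on the four triples to obtain $p_1$, $p_2$, $p_1 q_2 - p_2 q_1$, $p_1 r_2 - p_2 r_1$, and conclude that coassociativity is equivalent to $p_1 \equiv p_2 \equiv 0$, giving negative superminimality plus $d\gamma = 0$. The paper does exactly this (with the cosmetic difference that it repackages the $t_2,t_3$-dependence via $\nu = t_2\nu_1 + t_3\nu_2$ and $\nup = -t_3\nu_1 + t_2\nu_2$).

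There is, however, a small gap in your justification for step (i). You assert that ``every other term of $\ph$ requires a factor along $\bn_1$ or $\bn_2$'', but this is false: the monomial $\co_1 \wedge \be^1 \wedge \be^2$ inside $\co_1 \wedge (\be^1 \wedge \be^2 - \bn^1 \wedge \bn^2)$ involves no $\bn^k$ at all. Your conclusion is nevertheless correct, because each of the four triples contains at least one of $F_2 = \co^2$ or $F_3 = \co^3$, and $\co_1 \wedge \be^1 \wedge \be^2$ annihilates both of these; so this extra monomial also contributes nothing. Once you patch this one sentence, your argument matches the paper's line for line.
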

\begin{proof}
We need to determine when every tangent space to $\eta + F$ is a coassociative subspace of the corresponding tangent space to $\Lambda^2_-(\R^4)$. In local coordinates the immersion $h$ is given by
\begin{equation*}
h : (u^1, u^2, t_2, t_3) \mapsto (x^1(u^1, u^2), x^2(u^1, u^2), \, \gamma(u^1, u^2) \omega^1+ t_2 \omega^2 + t_3 \omega^3),
\end{equation*}
where $\eta = \gamma \omega^1$ for some smooth \emph{globally defined} function $\gamma$ on $L$, since $E$ is trivialized by the global section $\omega^1$. As before, we omit the explicit dependence of each $\omega^i$ on $(u^1, u^2)$. Thus the tangent space to $X_{\eta}$ at the point $(x (\mathbf u_0), \eta + t_2 \omega^2 + t_3 \omega^3)$ is
spanned by the vectors
\begin{align*}
E_i & = \, h_* \left(\frac{\partial}{\partial u^i}\right) = \left( e_i , \nabla_{e_i}(\gamma \omega^1) +  t_2 \nabla_{e_i} (\omega^2) + t_3 \nabla_{e_i} (\omega^3) \right) \qquad i = 1, 2, \\ F_j & = \, h_* \left(\frac{\partial}{\partial t_j}\right) = ( 0, \omega^j) = \co^j \qquad j = 2, 3.
\end{align*}
Using Proposition~\ref{selfdualformulasprop} we find that
\begin{equation*}
E_i = \, \be_i + a_i \co^1 +b_i \co^2 + c_i \co^3,
\end{equation*}
where
\begin{equation*}
a_i \, = \, \gamma_i + t_2 ( A^1_{i2} - A^2_{i1} ) + t_3 ( A^2_{i2} + A^1_{i1} ), \qquad b_i \, = \, \gamma (A^2 _{i1} - A^1_{i2}) , \qquad c_i \, = \, \gamma (-A^1_{i1} - A^2_{i2}),
\end{equation*}
with $\gamma_i = \frac{\partial \gamma}{\partial u^i}$. As in~\cite{IKM}, we define the vectors $\nu(t_2, t_3) = t_2 \nu_1 + t_3 \nu_2$ and $\nup(t_2, t_3) =
- t_3 \nu_1 + t_2 \nu_2$, which are orthogonal normal vectors. Then the expressions for $a_i$ simplifies to
\begin{equation*}
a_i \, = \, \gamma_i + (A^{\nu}_{i2} - A^{\nup}_{i1}). \\
\end{equation*}
Now since we have
\begin{align*}
\ph \, = \, \, & \co_1 \wedge \co_2 \wedge \co_3 + \co_1 \wedge (\be^1 \wedge \be^2 - \bn^1 \wedge \bn^2 ) \\ & {}+ \co_2 \wedge (\be^1 \wedge \bn^1 - \bn^2 \wedge \be^2 ) + \co_3 \wedge (\be^1 \wedge \bn^2 - \be^2 \wedge \bn^1 )
\end{align*}
we can check when the immersion is coassociative by determining when $\ph$ restricts to zero on each of these tangent spaces. A computation gives\begin{align*}
\ph (E_1, E_2, F_2) & = \, a_2 c_1 - a_1 c_2, & \ph (E_1, E_2, F_3) & = \, a_1 b_2 - a_2 b_1, \\ 
\ph (F_2, F_3, E_1) & = \, a_1, & \ph (F_2, F_3, E_2) & = \, a_2.
\end{align*}
Hence these all vanish if and only if $a_1 = a_2 = 0$. Replacing $(t_2, t_3)$ by $(\lambda t_2, \lambda t_3)$ changes $A^{\nu}$ to $\lambda A^{\nu}$ and $A^{\nup}$ to $\lambda A^{\nup}$, and thus $a_1 = a_2 = 0$ for all $x \in L$ and all $t_2, t_3 \in \R$ if and only if
\begin{equation*}
A^{\nu}_{12} - A^{\nup}_{11} = 0, \qquad A^{\nu}_{22} - A^{\nup}_{12} = 0, \qquad \gamma_1 = 0, \qquad \gamma_2 = 0.
\end{equation*}
As explained in~\cite{IKM}, the first two equations above say that $L$ is negative superminimal in $\R^4$, while the last two equations say that $\gamma$ is a constant function on $L$. Hence we find that
\begin{equation*}
\nabla^E_{e_i} (\eta) = \pi_E (\nabla_{e_i} \eta) = \pi_E (\gamma_i \omega^1 + \gamma \nabla_{e_i} \omega^1) = 0
\end{equation*}
using Proposition~\ref{selfdualformulasprop}. Thus $\eta$ is a parallel section of $E$ with respect to $\nabla^E$.
\end{proof}
\begin{rmk} \label{coassrmk}
In~\cite{IKM}, it is shown that when $L$ is negative superminimal in $\R^4$, the section $\omega^1$ of $\rest{\Lambda^2_- (\R^4)}{L}$ is a parallel section, and the coassociative submanifolds constructed there are actually complex surfaces lying inside a $\C^6$ in $\R^7$. Theorem~\ref{coassthm} says that these can only be twisted by a constant multiple of $\omega^1$, and are thus just affine translates of the examples from~\cite{IKM}.
\end{rmk}

\subsection{Cayley submanifolds of $\R^8$} \label{spin7sec}

In this section we consider the $\SP$-manifold $\R^8 \cong \spm (\R^4)$, the bundle of negative chirality spinors over $\R^4$, and its Cayley submanifolds. We begin by briefly reviewing some of the facts discussed in~\cite{IKM}. Writing the octonions as $\Oc \cong \Qu \oplus \Qu \oee$, the fibre of spinors ${\spi}_x$ over $x \in \R^4$ is isomorphic to $\Oc$, with $(\spp)_x \cong \Qu \oee$ and $(\spm)_x \cong \Qu$. In addition the cotangent space $T^*_x \R^4$ iis also identified with $\Qu \oee$. With these identifications, the Clifford product of a cotangent vector in $T^*_x \R^4$ with a spinor in $\spi_x$ is given by octonionic multiplication. Explicitly, the representation is given by
\begin{align*}
\gamma & \, : \, \, T^* \R^4 \to \operatorname{End}(\spp \oplus \spm) \\
\gamma(\alpha) (s) & \, = \, \alpha s 
\end{align*}
where $\alpha$ is a $1$-form, $s \in \spp \oplus \spm$ and the product $\alpha s$ is octonionic multiplication. Since $\Oc$ is \emph{not associative}, we need to be careful when composing two elements of this representation:
\begin{equation*}
\left(\gamma(\alpha_1) \gamma(\alpha_2)\right) (s) = \gamma(\alpha_1) \left( \gamma(\alpha_2)(s) \right) =  \gamma(\alpha_1) (\alpha_2 s) = \alpha_1 ( \alpha_2 s)
\end{equation*}
which in general is \emph{not} the same as $(\alpha_1 \alpha_2) s$.

Now, if $L^2$ is an oriented submanifold of $\R^4$, then the restriction $\rest{\spm (\R^4)}{L}$ splits naturally into the direct sum of two rank $2$ real vector bundles $V_+ \oplus V_-$ over $L$. This can be seen as follows. Define $r = \gamma(e^1) \gamma(e^2)$ for any oriented orthonormal basis of $L$. It is easy to see that $r$ is well defined, and in~\cite{IKM} it is shown that $r$ is a linear endomorphism of $\spm$ such that $r^2 = -1$, so $r$ is a complex structure on $\rest{\spm (\R^4)}{L}$, and $V_+$ and $V_-$ are defined to be the $\pm i$ eigenspaces of $r$. In fact the map $r$ is given, using $(\spm)_x \cong \Qu$, by right multiplication by the unit imaginary quaternion $\jm = e^1 e^2$ where $e^1, e^2 \in T^*_x \R^4 \cong \Qu \oee$. Thus at each point $x \in L$, we have $(V_+)_x = \spn { \{\oo, \jm \} }$ and $(V_-)_x = (\spn { \{\oo, \jm \} })^{\perp}$ in $\Qu$.

 In~\cite{IKM}, it is proved that the total space of $V_+$ is \emph{Cayley} in $\R^8$ if and only if $L$ is minimal. (This is true for $V_-$ as well.) As in the two previous sections, we want to consider the natural twisted version of this construction. Let $\psi$ be a section of the bundle $V_-$ over $L$. Define $X_{\psi}$ by
\begin{equation} \label{Xpsidefneq}
X_{\psi} \, = \, \{(x, \chi + \psi_x) \in \rest{\spm(\R^4)}{L} : \, x \in L, \, \eta \in (V_+)_x \}.
\end{equation}
This is a ``twisting'' of the bundle $V_+$ over $L$ obtained by affinely translating each fibre $(V_+)_x$ by a vector $\psi_x$, in the orthogonal complement $(V_-)_x$, which varies with $x \in L$. We will mnemonically write $X_{\psi}$ as ``$V_+ + \psi$''. We want to find conditions on the immersion of $L$ in $\R^4$ and on the section $\psi$ of $V_-$ so that $X_{\psi}$ is a Cayley submanifold of $\R^8$.

Before stating our theorem, we make the following observations. As in Section~\ref{g2sec}, the submanifold $L$ is a K\"ahler manifold of complex dimension one. Also, $V_+ \oplus V_-$ is a quaternionic line bundle on $L$, which is identified with a $\C^2$-bundle over $L$ by the complex structure $r$. However, we can also think of each $V_{\pm}$ as a complex line bundle over $L$, with respect to a different complex structure. Specifically, the identification $(V_+)_x = \spn { \{\oo, \jm \} }$ makes $V_+$ into an $\mathrm{SO}(2) \cong \mathrm{U}(1)$ bundle, with complex structure $J_+$ on $V_+$ given by $J_+ (\oo) = \jm$ and $J_+ (\jm) = - \oo$. Then $V_-$, being the orthogonal complement of $V_+$ in the $\mathrm{SO}(4)$ bundle $\rest{\spm (\R^4)}{L}$, is also a complex line bundle. Since $L$ is complex one dimensional, both $V_+$ and $V_-$ are actually \emph{holomorphic} line bundles over $L$.

\begin{thm} \label{cayleythm}
The submanifold $X_{\psi}$ is a Cayley submanifold of $\R^8 \cong \spm (\R^4)$ if and only if $L$ is minimal in $\R^4$ and $\psi$ is a \emph{holomorphic section} of $V_-$.
\end{thm}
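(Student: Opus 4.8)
The plan is to follow the template of the proof of Theorem~\ref{assthm}, replacing the octonion associator (the obstruction to an oriented $3$-plane being associative) by the corresponding obstruction to an oriented $4$-plane being Cayley. First I would choose local coordinates $(u^1, u^2)$ on $L$, fibre coordinates $(t_1, t_2)$ for $V_+$ with respect to the local trivialization $\{\oo, \jm\}$, and a local trivialization $\{s_1, s_2\}$ of $V_-$, so that $\psi = f\, s_1 + g\, s_2$ for smooth functions $f, g$ on $L$. The immersion of $X_\psi$ (mnemonically ``$V_+ + \psi$'') into $\spm(\R^4)$ is
\begin{equation*}
h : (u^1, u^2, t_1, t_2) \mapsto \bigl( x^1(u), x^2(u), \, t_1 \oo + t_2 \jm + f(u)\, s_1 + g(u)\, s_2 \bigr),
\end{equation*}
and a basis for its tangent space is $E_i = h_*(\partial/\partial u^i) = \bigl( e_i, \, t_1 \nabla_{e_i}\oo + t_2 \nabla_{e_i}\jm + f_i s_1 + f \nabla_{e_i} s_1 + g_i s_2 + g \nabla_{e_i} s_2 \bigr)$ for $i = 1, 2$, together with $F_1 = h_*(\partial/\partial t_1) \leftrightarrow \oo$ and $F_2 = h_*(\partial/\partial t_2) \leftrightarrow \jm$. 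Using formulas for the covariant derivatives of an adapted local frame of $V_\pm$ (derived as in~\cite{IKM}, analogously to Proposition~\ref{selfdualformulasprop}), each $E_i$ becomes an explicit octonion whose coefficients are affine in $(t_1, t_2)$ and linear in the second fundamental form entries $A^k_{ij}$ and the derivatives $f_i, g_i$, once the $8$-dimensional tangent space of $\spm(\R^4)$ is identified with $\Oc = \Qu \oplus \Qu\oee$ via $T_x^*\R^4 \cong \Qu\oee$ and $(\spm)_x \cong \Qu$.

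Next I would impose the Cayley condition on the $4$-plane $\spn\{E_1, E_2, F_1, F_2\}$. As in~\cite{IKM}, this is detected by the vanishing of an explicit algebraic expression built from octonion multiplication --- the analogue, for $4$-planes and the Cayley calibration, of the associator $[E_1, E_2, F_1]$ used in the associative case --- or equivalently by requiring that the Cayley four-form $\Ph$ restrict to the volume form (for a suitable orientation) on each such tangent space. Carrying out this tedious but mechanical computation with the octonion multiplication table of Appendix~\ref{octtablesec} reduces the Cayley condition to a system of scalar equations in $A^k_{ij}$, $t_1$, $t_2$, $f_i$, $g_i$. Since $(t_1, t_2)$ enter only affinely --- and, exactly as in the other proofs of this paper, rescaling $(t_1, t_2) \mapsto (\lambda t_1, \lambda t_2)$ rescales each $A^{\nu}$ to $\lambda A^{\nu}$ --- this system holds for all $x \in L$ and all $(t_1, t_2) \in \R^2$ if and only if its $t$-linear part and its $t$-independent part vanish separately. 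The $t$-linear part should collapse to $\tr A^{\nu_1} = \tr A^{\nu_2} = 0$, i.e.\ $L$ is minimal in $\R^4$, while the $t$-independent part should give a Cauchy--Riemann-type system, $f_1 = g_2$ and $f_2 = -g_1$ (up to signs and relabelling), on the components of $\psi$.

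Finally, I would reinterpret this last pair of equations as holomorphicity, exactly as in~\eqref{holomorphiceq}: letting $\nabla^{V_-}$ be the connection on $V_-$ induced from $\nabla$ on $\R^4$, $\pi_{V_-}$ the orthogonal projection onto $V_-$, and $J_-$ the complex structure making $V_-$ a holomorphic line bundle over $L$, one computes
\begin{equation*}
(e_1 + i e_2)\bigl(\nabla^{V_-}\psi\bigr) = \nabla^{V_-}_{e_1}\psi + J_-\bigl(\nabla^{V_-}_{e_2}\psi\bigr) = \pi_{V_-}(\nabla_{e_1}\psi) + J_-\bigl(\pi_{V_-}(\nabla_{e_2}\psi)\bigr),
\end{equation*}
and, using the covariant-derivative formulas for $s_1, s_2$, this simplifies to a nonzero multiple of $(f_1 - g_2)\, s_1 + (g_1 + f_2)\, s_2$, so that the Cauchy--Riemann system is equivalent to $\bar\partial_{V_-}\psi = 0$, i.e.\ $\psi$ is a holomorphic section of $V_-$. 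Combining the two conclusions proves the theorem. I expect the main obstacle to be the middle step: isolating the correct octonionic quantity that cleanly encodes the Cayley condition for this adapted $4$-frame, and then pushing the resulting non-associative computation through without error, since the Cayley obstruction is algebraically more involved than the single associator appearing in Theorem~\ref{assthm}.
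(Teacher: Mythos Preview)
Your outline matches the paper's proof essentially step for step: the Cayley obstruction used there is the imaginary part of the octonionic four-fold cross product $\imag(E_1 \times E_2 \times F_1 \times F_2)$, and the key computational simplification is that $\nabla_{e_k} s_j \in V_+$ for $j=1,2$, so these terms may be dropped from $E_1, E_2$ before computing the alternating four-fold product against $F_1, F_2 \in V_+$. The resulting conditions then separate exactly as you predict into $\tr A^{\nu_1} = \tr A^{\nu_2} = 0$ and the Cauchy--Riemann pair $f_1 = g_2$, $f_2 = -g_1$, interpreted as $\bar\partial_{V_-}\psi = 0$ just as in~\eqref{holomorphiceq}.
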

\begin{proof}
We need to determine when every tangent space to $X_{\psi}$ is a Cayley  subspace of the corresponding tangent space to $\spm (\R^4)$. In local coordinates the immersion $h$ is
\begin{equation*}
h : (u^1, u^2, t_1, t_2) \mapsto (x^1(u^1, u^2), x^2(u^1, u^2), \, t_1 q_1(u^1,u^2) + t_2 q_2(u^1,u^2) + \alpha(u^1, u^2) q_3 + \beta(u^1, u^2) q_4)
\end{equation*}
where $q_1$ and $q_2$ are a local oriented orthonormal frame for $V_+$ and $q_3$ and $q_4$ are a local oriented orthonormal frame for $V_-$. Here $\psi = \alpha q_3 + \beta q_4$. We omit the explicit dependence of each $q_i$ on $(u^1, u^2)$ for notational simplicity.

The tangent space to $X_{\psi}$ at $(x (\mathbf u_0), \, t_1 q_1 + t_2 q_2 + \psi)$ is spanned by the vectors
\begin{equation} \label{cayleytangentvectoreq1}
\begin{aligned}
E_k & = \, h_* \left(\frac{\partial}{\partial u^k}\right) = e_k + \nabla_{e_k} (t_1 q_1 + t_2 q_2) + \nabla_{e_k}(\alpha q_3 + \beta q_4), \qquad k = 1, 2, \\ F_k & = \, h_* \left(\frac{\partial}{\partial t_k}\right) = q_k, \qquad k = 1, 2.
\end{aligned}
\end{equation}
In~\cite{IKM}, an expression is derived for $\nabla_{e_k} q_j$ for $j=1,2$. The exact same argument, with an extra minus sign, gives $\nabla_{e_k} q_j$ for $j=3,4$. The results are:
\begin{equation} \label{cayleytangentvectoreq2}
\begin{aligned}
\nabla_{e_k} q_j & = \, \frac{\jm}{2} \left( A^1_{k1} \gamma(\nu^1) \gamma(e^2) + A^2_{k1} \gamma(\nu^2) \gamma(e^2) + A^1_{k2} \gamma(e^1) \gamma(\nu^1) + A^2_{k2} \gamma(e^1) \gamma(\nu^2) \right) q_j, \quad j = 1,2, \\ \nabla_{e_k} q_j & = \, -\frac{\jm}{2} \left( A^1_{k1} \gamma(\nu^1) \gamma(e^2) + A^2_{k1} \gamma(\nu^2) \gamma(e^2) + A^1_{k2} \gamma(e^1) \gamma(\nu^1) + A^2_{k2} \gamma(e^1) \gamma(\nu^2) \right) q_j, \quad j = 3,4.
\end{aligned}
\end{equation}
where we have used the notation $A^k_{ij} = \langle e_i, A^{\nu_k} (e_j) \rangle$.  Note that the operators $\gamma(e^i)\gamma(\nu^j)$ all anti-commute with $r = \gamma(e^1)\gamma(e^2)$ and hence interchange $V_+$ and $V_-$. Thus in particular, we note that $\nabla e_k q_j$ is in $V_+$ for $j=3,4$. This will greatly simplify the computation below.

To check when the tangent space at $(x (\mathbf u_0), \, t_1 q_1 + t_2 q_2 + \psi)$ is Cayley, we need to determine when the purely imaginary $4$-fold octonion product $\imag (E_1 \times E_2 \times F_1 \times F_2)$ vanishes. This $4$-fold product is given by
\begin{equation*}
\imag (a \times b \times c \times d) = \imag \left( \bar a ( b ( \bar c d) ) \right)
\end{equation*}
whenever $a,b,c,d$ are \emph{orthogonal} octonions. Here $\bar a$ is the conjugate of $a$. For non-orthogonal arguments we can write them in terms of an orthogonal basis and expand by multilinearity. (See~\cite{HL} Section IV.1.C for details.) Without loss of generality we can assume that at the point $ x(\mathbf u_0)$, we have chosen our coordinates so that $e^1 = \oee$ and $e^2 = \oie$ with respect to the identification $T_x (\spm (\R^4)) \cong \Oc$, where $T_x (\R^4) \cong \Qu \oee$ and $(\spm)_x \cong \Qu$. Similarly we can also take $\nu^1 = \oje$ and $\nu^2 = \oke$. From this choice it follows that $\jm = \oee (\oie) = \oi$. Thus at this point $x$, the oriented orthonormal basis for $V_+$ is just $q_1 = \oo$, $q_2 = \oi$, and the orthonormal basis for $V_-$ is $q_3 = \oj$ and $q_4 = \ok$. Now we can compute (using the octonion multiplication table) and find:
\begin{align*}
\gamma(e^1)\gamma(\nu^1)q_1 & = \oj, & \gamma(e^1)\gamma(\nu^1)q_2 & = \ok, \\ \gamma(e^1)\gamma(\nu^2)q_1 & = \ok, & \gamma(e^1)\gamma(\nu^2)q_2 & = -\oj, \\
\gamma(\nu^1)\gamma(e^2)q_1 & = \ok, & \gamma(\nu^1)\gamma(e^2)q_2 & = -\oj, \\ \gamma(\nu^2)\gamma(e^2)q_1 & = -\oj, & \gamma(\nu^2)\gamma(e^2)q_2 & = -\ok.
\end{align*}
Substituting the above expressions into~\eqref{cayleytangentvectoreq2} and using~\eqref{cayleytangentvectoreq1}, we find that the tangent vectors at the point $(x (\mathbf u_0), \, t_1 q_1 + t_2 q_2 + \psi)$ are given by
\begin{align*}
E_1 & = \, \oee + \frac{t_1}{2} \oi \left( (A^1_{12} - A^2_{11}) \oj + (A^1_{11} + A^2_{12} ) \ok \right) + \frac{t_2}{2} \oi \left( (-A^1_{11} - A^2_{12} ) \oj + (A^1_{12} - A^2_{11}) \ok \right) \\ & \qquad {} + \alpha_1 \oj + \alpha \nabla_{e_1} q_3 + \beta_1 \ok + \beta \nabla_{e_1} q_4, \\ E_2 & = \, \oie + \frac{t_1}{2} \oi \left( (A^1_{22} - A^2_{12}) \oj + (A^1_{12} + A^2_{22} ) \ok \right) + \frac{t_2}{2} \oi \left( (-A^1_{12} - A^2_{22}) \oj + (A^1_{22} - A^2_{12} ) \ok \right) \\ & \qquad {} + \alpha_2 \oj + \alpha \nabla_{e_2} q_3 + \beta_2 \ok + \beta \nabla_{e_2} q_4, \\ F_1 & = \, \oo, \\ F_2 & = \, \oi.
\end{align*}
where $\alpha_i = \frac{\partial \alpha}{\partial u^i}$ and $\beta_i = \frac{\partial \beta}{\partial u^i}$.
As mentioned above, $\nabla_{e_k} q_3$ and $\nabla_{e_k} q_4$ are both in $V_+$, which is spanned by $\oo$ and $\oi$. Since the $4$-fold product $\imag (E_1 \times E_2 \times F_1 \times F_2)$ is alternating, and since $F_1 = \oo$ and $F_2 = \oi$, we can drop the terms $\nabla_{e_k} q_3$ and $\nabla_{e_k} q_4$ from $E_1$ and $E_2$ for the purposes of computing $\imag (E_1 \times E_2 \times F_1 \times F_2)$. After a tedious computation using the table in Appendix~\ref{octtablesec}, the result is
\begin{equation} \label{cayleytempeq}
\imag (E_1 \times E_2 \times F_1 \times F_2) = (C_4 C_1 - C_3 C_2) \oi +(C_1 - C_3) \oje + (C_2 - C_4) \oke,
\end{equation}
where
\begin{align*}
C_1 & = \, \frac{t_1}{2} ( A^1_{22} - A^2_{12} ) - \frac{t_2}{2} (A^1_{12} + A^2_{22}) + \beta_2, \\
C_2 & = \, \frac{t_1}{2} ( A^1_{12} + A^2_{22} ) + \frac{t_2}{2} (A^1_{22} - A^2_{12}) - \alpha_2, \\
C_3 & = \, -\frac{t_1}{2} ( A^1_{11} + A^2_{12} ) - \frac{t_2}{2} (A^1_{12} - A^2_{11}) + \alpha_1, \\
C_4 & = \, \frac{t_1}{2} ( A^1_{12} - A^2_{11} ) - \frac{t_2}{2} (A^1_{11} + A^2_{12}) + \beta_1. \\
\end{align*}
For~\eqref{cayleytempeq} to vanish, we must have $C_1 = C_3$ and $C_2 = C_4$, so the coefficient of $\oi$ will vanish automatically. The last two terms can be simplified to
\begin{equation*}
\left( \frac{t_1}{2} (A^1_{11} + A^1_{22}) - \frac{t_2}{2} (A^2_{11} + A^2_{22}) + (\beta_2 - \alpha_1) \right) \oje + \left( \frac{t_1}{2} (A^2_{11} + A^2_{22}) + \frac{t_2}{2} (A^1_{11} + A^1_{22}) - (\alpha_2 + \beta_1) \right) \oke.
\end{equation*}
This clearly vanishes for all $t_1, t_2$ if and only if $\tr A^{\nu_1} = \tr A^{\nu_2} = 0$ and $\alpha_1 = \beta_2$ and $\alpha_2 = - \beta_1$. The first two conditions say $L$ is minimal in $\R^4$. By an argument entirely analogous to that at the end of the proof of Theorem~\ref{assthm}, the last two conditions are equivalent to $\psi$ being a holomorphic section of $V_-$.
\end{proof}

\section{An explicit example} \label{examplesec}

In this section we will content ourselves with a family of explicit examples of a ``twisted'' associative subbundle of $\R^7$. Recall that a complex one-dimensional submanifold of $\R^4 \cong \C^2$ is a minimal surface. Consider the holomorphic surface $(x, y, u(x,y), v(x,y))$ in $\R^4$ where the Cauchy-Riemann equations $u_x = v_y$ and $u_y = - v_x$ are satisfied. Then one can construct the vector
$\omega^1 = e^1 \wedge e^2 - \nu^1 \wedge \nu^2$ in $\Lambda^2_-$ and it turns out to be (using the Cauchy-Riemann equations to simplify):
\begin{equation} \label{exampleo1eq}
\omega^1 \, = \, \frac{1}{1 + {|\nabla u|}^2} \left( 1 - {|\nabla u|}^2 , 2 u_y, 2 u_x \right).
\end{equation}
Similarly, one can compute that
\begin{equation} \label{exampleo23eq}
\begin{aligned}
\omega^2 & = \, \frac{1}{1 + {|\nabla u|}^2} \left(- 2 u_y , 1 + u_x^2 - u_y^2 , -2 u_x u_y \right), \\
\omega^3 & = \, \frac{1}{1 + {|\nabla u|}^2} \left(- 2 u_x ,  -2 u_x u_y, 1 - u_x^2 + u_y^2 \right).
\end{aligned}
\end{equation}
Hence Theorem~\ref{assthm} gives the following associative submanifold of $\mathbb R^7$:
\begin{equation} \label{exampleeq1}
(t \omega^1 + \alpha(x,y) \omega^2 + \beta(x,y) \omega^3, x, y, u(x,y), v(x,y) ),
\end{equation}
where $\alpha \omega^2 + \beta \omega^3$ is a holomorphic section of the holomorphic line bundle $F$ over $L$. Since we have not chosen an adapted basis satisfying~\eqref{assumptioneq}, the equations for holomorphicity are \emph{not} $\alpha_x = \beta_y$ and $\alpha_y = -\beta_x$. Instead, we need to again follow the argument in equation~\eqref{holomorphiceq}, but this time we cannot use Proposition~\ref{selfdualformulasprop}. However, since the $\omega^j$'s have unit length, the covariant derivatives $\nabla_{e_k} \omega^j$ have no component in the $\omega^j$ direction. Thus we find
\begin{equation*}
\begin{aligned}
(e_1 + i e_2) \left( \nabla^F \sigma \right)  & = \, \nabla^F_{e_1} \sigma + J( \nabla^F_{e_2} \sigma) \\ & = \, \pi_F (\nabla_{e_1} \sigma) + J( \pi_F (\nabla_{e_2} \sigma) ) \\ & = \, \pi_F (\alpha_1 \omega^2 + \alpha \nabla_{e_1} \omega^2 + \beta_1 \omega^3 + \beta \nabla_{e_1} \omega^3) + J ( \pi_F (\alpha_2 \omega^2 + \alpha \nabla_{e_2} \omega^2 + \beta_2 \omega^3 + \beta \nabla_{e_2} \omega^3)) \\ & = \, \alpha_1 \omega^2 + \beta_1 \omega^3 + \alpha \langle \nabla_{e_1} \omega^2 , \omega^3 \rangle \omega^3 + \beta \langle \nabla_{e_1} \omega^3 , \omega^2 \rangle \omega^2 \\ & \qquad {}+ J(\alpha_2 \omega_2 + \beta_2 \omega^3 + \alpha \langle \nabla_{e_2} \omega^2 , \omega^3 \rangle \omega^3 + \beta \langle \nabla_{e_2} \omega^3 , \omega^2 \rangle \omega^2) \\ & = \, \left( \alpha_1 - \beta_2 + \beta \langle \nabla_{e_1} \omega^3, \omega^2 \rangle - \alpha \langle \nabla_{e_2} \omega^2 , \omega^3 \rangle \right) \omega^2 \\ & \qquad {}+ \left( \beta_1 + \alpha_2 + \alpha \langle \nabla_{e_1} \omega^2, \omega^3 \rangle + \beta \langle \nabla_{e_2} \omega^3, \omega^2 \rangle \right) \omega^3,
\end{aligned}
\end{equation*}
and thus $\sigma = \alpha \omega^2 + \beta \omega^3$ is holomorphic if and only if
\begin{equation} \label{holomorphiceq2}
\begin{aligned}
\alpha_x - \beta_y & \, = \, -\beta \langle \nabla_{e_1} \omega^3, \omega^2 \rangle + \alpha \langle \nabla_{e_2} \omega^2 , \omega^3 \rangle, \\ \alpha_y + \beta_x & \, = \, -\alpha \langle \nabla_{e_1} \omega^2, \omega^3 \rangle - \beta \langle \nabla_{e_2} \omega^3, \omega^2 \rangle.
\end{aligned}
\end{equation}
Hence, if $\alpha$ and $\beta$ satisfy equations~\eqref{holomorphiceq2} then~\eqref{exampleeq1} gives an associative submanifold of $\R^7$. For a concrete choice, let us take $u + i v = e^z$, and thus $u(x,y) = e^x \cos y$ and $v(x,y) = e^x \sin y$. Then one can check that equations~\eqref{holomorphiceq2} become
\begin{align*}
\alpha_x & = \, \beta_y - \frac{2 e^{2x}}{1 + e^{2x}} \alpha, \\ \alpha_y & = \, - \beta_x + \frac{2 e^{2x}}{1 + e^{2x}} \beta.
\end{align*}
We can find one simple family of solutions by assuming that $\alpha$ and $\beta$ are independent of $y$. These can then be integrated to obtain
\begin{equation*}
\alpha \, = \, \frac{C}{1 + e^{2x}}, \qquad \qquad \beta \, = \, K(1 + e^{2x}),
\end{equation*}
for some constants $C$ and $K$. Substituting these into~\eqref{exampleeq1}, using~\eqref{exampleo1eq} and~\eqref{exampleo23eq}, and simplifying, we obtain
\begin{align*}
x^1 & \, = \, \frac{t - t e^{4x} + 2 C e^x \sin y - 2K e^x \cos y \, (1 + e^{2x})^2}{1 + 2 e^{2x} + e^{4x}}, \\ x^2 & \, = \,  \frac{- 2 t e^x \sin y - 2 t e^{3x} \sin y + C(1 + 2 e^{2x} \cos 2 y) + K(1 + e^{2x})^2 e^{2x} \sin 2y}{1 + 2 e^{2x} + e^{4x}}, \\ x^3 & \, = \, \frac{2 t e^x \cos y + 2 t e^{3x} \cos y + C e^{2x} \sin 2 y + K(1 + e^{2x})^2(1 - e^{2x} \cos 2 y)}{1 + 2 e^{2x} + e^{4x}}, \\ x^4 & \, = \, x, \\ x^5 & \, = \, y, \\ x^6 & \, = \, e^x \cos y, \\ x^7 & \, = \, e^x \sin y,
\end{align*}
as an explicit example of a \emph{non-ruled} associative submanifold of $\R^7$. When $C = K = 0$, this reduces to the (ruled) example of Section 5.2 of~\cite{IKM}. Similarly lengthy computations can also be done in the coassociative and Cayley cases.

\section{Conclusion} \label{conclusionsec}

The results in the present paper demonstrate that noncompact calibrated ``subbundles'' of Euclidean space, which in particular are \emph{ruled} calibrated submanifolds, have a rich deformation theory, that includes deformations through \emph{non-ruled} calibrated submanifolds. It is an interesting question to study whether or not there exist any other deformations which are not of this type. A general theorem on the deformation theory of ruled calibrated submanifolds is still lacking, although some work has been done by Joyce~\cite{J, JS} and Lotay~\cite{Lo1, Lo2}, among others.

In addition, given the likelihood that the ``twisted special Lagrangian subbundle'' equations~\eqref{SLthmeq} admit solutions in which $L^p$ is \emph{not} austere in $\R^n$, especially for $p \geq 3$, this would give a negative answer to the ``Question 5.3'' posed at the end of~\cite{KM}. It is interesting to note, however, that in the case of the exceptional calibrations, Theorems~\ref{assthm},~\ref{coassthm}, and~\ref{cayleythm} in the present paper show that the ``base'' of the twisted calibrated subbundles \emph{remains} of the same type as in the untwisted case: minimal or negative superminimal. Hence these constructions do not contradict ``Question 5.3'' of~\cite{KM} in the case of exceptional calibrations. While there is admittedly little evidence for ``Question 5.3'', it certainly remains of great interest to study possible local models for the intersections of calibrated submanifolds inside compact manifolds of special holonomy.

It would also be interesting to determine to what extent these ``twisted'' constructions extend to the cohomogeneity one special holonomy metrics considered in~\cite{KM}. It seems likely that they do. In fact, it is conceivable that these results may hold in general manifolds of special holonomy, although if so, proofs would probably need somewhat different techniques.

\appendix

\section{An identity involving the symmetric polynomials of a matrix} \label{appendixsec}

In this appendix we derive an identity that is used in Theorem~\ref{SLthm}.

\begin{lemma} \label{appendixlemma}
Let $A$ and $B$ be $p\times p$ real matrices. Then the following identity holds:
\begin{equation*}
\sum_{k=0}^p t^k \left( {\left. \frac{d^j}{ds^j} \right|}_{s=0} \sigma_k(B + sA) \right) \, = \, j! \, t^j \, \det(I + t B) \, \sigma_j (A (I + tB)^{-1} )
\end{equation*}
for all $t \in \C$ such that $I + t B$ is invertible, and all $0 \leq j \leq p$.
\end{lemma}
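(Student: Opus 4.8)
The plan is to compute both sides by first diagonalizing or triangularizing $B$ over $\C$, and reducing to the case where $B$ is diagonal; then to expand the generating polynomial $\det(I + t(B+sA))$ and differentiate. First I would observe that the identity is polynomial in the entries of $A$ and $B$ for fixed $t$ with $I+tB$ invertible, so it suffices to prove it when $B$ is diagonalizable (these are dense), and then by conjugation-invariance of every $\sigma_k$ (both sides are unchanged under $A\mapsto P^{-1}AP$, $B\mapsto P^{-1}BP$) we may assume $B=\operatorname{diag}(b_1,\dots,b_p)$. Actually, a cleaner route avoiding density arguments: use the definition \eqref{sigmakeq} directly. Write $F(s) = \det(I + t(B+sA)) = \sum_{k=0}^p t^k \sigma_k(B+sA)$. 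Then the left-hand side is exactly $\left.\frac{d^j}{ds^j}\right|_{s=0} F(s)$, so the whole statement is the single identity
\begin{equation*}
\left.\frac{d^j}{ds^j}\right|_{s=0} \det\bigl(I + tB + stA\bigr) \, = \, j!\, t^j\, \det(I+tB)\,\sigma_j\bigl(A(I+tB)^{-1}\bigr).
\end{equation*}

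Next I would factor $\det(I + tB + stA) = \det(I+tB)\det\bigl(I + st\,A(I+tB)^{-1}\bigr)$, valid since $I+tB$ is invertible. Setting $C = A(I+tB)^{-1}$, the factor $\det(I+tB)$ is a constant in $s$, so it remains to show
\begin{equation*}
\left.\frac{d^j}{ds^j}\right|_{s=0} \det(I + st\,C) \, = \, j!\, t^j\, \sigma_j(C).
\end{equation*}
But by \eqref{sigmakeq} applied with parameter $st$ in place of $t$, we have $\det(I + stC) = \sum_{k=0}^p (st)^k \sigma_k(C) = \sum_{k=0}^p s^k t^k \sigma_k(C)$, a polynomial in $s$ whose $j$-th Taylor coefficient at $s=0$ is $t^j\sigma_j(C)$; differentiating $j$ times and evaluating at $s=0$ picks out $j!\,t^j\sigma_j(C)$. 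This completes the argument.

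The only genuine point requiring care is the multiplicativity factorization $\det(I+tB+stA)=\det(I+tB)\det(I+st\,A(I+tB)^{-1})$, which is immediate from $\det$ being multiplicative together with $I+tB+stA = (I+tB)\bigl(I + st(I+tB)^{-1}A\bigr)$ and the conjugation identity $\sigma_j\bigl((I+tB)^{-1}A\bigr) = \sigma_j\bigl(A(I+tB)^{-1}\bigr)$ (characteristic-polynomial coefficients of $XY$ and $YX$ agree). I do not expect any real obstacle here; the original phrasing of the lemma in terms of $\sigma_k(B+sA)$ on the left merely repackages the generating-function definition \eqref{sigmakeq}, and once one recognizes the left side as a single derivative of $\det(I+tB+stA)$ the result is essentially the chain rule plus $\det$-multiplicativity. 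The main thing to state explicitly is that $I+tB$ invertible is exactly what is needed to perform the factorization, matching the hypothesis.
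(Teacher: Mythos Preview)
Your proposal is correct and essentially identical to the paper's own proof: both recognize the left-hand side as $\left.\dfrac{d^j}{ds^j}\right|_{s=0}\det(I+tB+stA)$ via the defining relation~\eqref{sigmakeq}, factor out $\det(I+tB)$ using multiplicativity, expand $\det(I+stC)$ as $\sum_k (st)^k\sigma_k(C)$, and differentiate. The only cosmetic difference is that the paper factors as $(I+stA(I+tB)^{-1})(I+tB)$ directly, avoiding your appeal to $\sigma_j(XY)=\sigma_j(YX)$; your initial remarks about diagonalizing $B$ are unnecessary but harmless since you abandon them for the same clean route.
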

\begin{proof}
We begin by applying~\eqref{sigmakeq} to $B + sA$, and expanding:
\begin{align*}
\sum_{k=0}^p t^k \sigma_k(B + sA) & = \, \det(I + t(B + s A)) \\ & = \, \det( ( I + st A (I + tB)^{-1}) (I + t B) ) \\ & = \, \det(I + t B) \det(I + (st) A (I + tB)^{-1}) \\ & = \, \det(I + t B) \sum_{k=0}^p s^k t^k \sigma_k ( A (I + tB)^{-1} ).
\end{align*}
Now we differentiate the above equation $j$ times with respect to $s$, and note that on the right hand side, only the terms with $k \geq j$ survive:
\begin{equation*}
\sum_{k=0}^p t^k \left( \frac{d^j}{ds^j} \sigma_k(B + sA) \right) \, = \, \det(I + t B) \sum_{k=j}^p \frac{k!}{(k-j)!} s^{k-j} t^k \sigma_k (A (I + tB)^{-1}).
\end{equation*}
Setting $s=0$ above, only the term with $k=j$ survives on the right hand side. We obtain
\begin{equation*}
\sum_{k=0}^p t^k \left( {\left. \frac{d^j}{ds^j} \right|}_{s=0} \sigma_k(B + sA) \right) \, =  j! \, t^j \, \det(I + t B) \, \sigma_j (A (I + tB)^{-1} ).
\end{equation*}
\end{proof}

\section{Octonion multiplication table} \label{octtablesec}

Here is a multiplication table for the octonions $\mathbb O$. The table corresponds to multiplying the element in the corresponding row on the left of the element in the corresponding column. For example $\oi \cdot \oj = \ok$.

\begin{table}[h]
\begin{center}
\begin{tabular}{|c||c|c|c|c|c|c|c|c|} 
\hline {} & \oo & \oi & \oj & \ok & \oee & \oie & \oje & \oke \\ \hline \hline \oo & \oo & \oi & \oj & \ok & \oee & \oie & \oje & \oke \\ \hline \oi & \oi & -\oo & \ok & -\oj & \oie & -\oee & -\oke & \oje \\ \hline \oj & \oj & -\ok & -\oo & \oi & \oje & \oke & -\oee & -\oie \\ \hline \ok & \ok & \oj & -\oi & -\oo & \oke & -\oje & \oie & -\oee \\ \hline \oee & \oee & -\oie & -\oje & -\oke & -\oo & \oi & \oj & \ok \\ \hline \oie & \oie & \oee & -\oke & \oje & -\oi & -\oo & -\ok & \oj \\ \hline \oje & \oje & \oke & \oee & -\oie & -\oj & \ok & -\oo & - \oi \\ \hline \oke & \oke & -\oje & \oie & \oee & -\ok & -\oj & \oi & -\oo \\ \hline
\end{tabular}
\end{center}
\end{table}


\begin{thebibliography}{99}\setlength\itemsep{-1mm}

\bibitem{Bo}A. Borisenko, ``Ruled special Lagrangian surfaces'', in {\it Minimal surfaces}, 269--285, {\em Adv. Soviet Math.}, Volume 15 (1993), Amer. Math. Soc., Providence, RI.

\bibitem{HL}R. Harvey\ and\ H. B. Lawson, ``Calibrated geometries'', {\em Acta Math.} {\bf 148} (1982), 47--157.

\bibitem{IKM}M. Ionel, S. Karigiannis\ and\ M. Min-Oo, ``Bundle constructions of calibrated submanifolds in $\R^7$ and $\R^8$'', {\em Math. Res. Lett.} {\bf 12} (2005), 493--512.

\bibitem{J}D. Joyce, Special Lagrangian submanifolds with isolated conical singularities. II. Moduli spaces, {\em Ann. Global Anal. Geom.} {\bf 25} (2004), 301--352.

\bibitem{JS}D. Joyce and S. Salur, Deformations of asymptotically cylindrical coassociative submanifolds with fixed boundary, {\em Geom. Topol.} {\bf 9} (2005), 1115--1146.

\bibitem{KM}S. Karigiannis\ and\ M. Min-Oo, ``Calibrated subbundles in noncompact manifolds of special holonomy'', {\em Ann. Global Anal. Geom.} {\bf 28} (2005), 371--394.

\bibitem{LM}R. B. Lockhart\ and\ R. C. McOwen, ``Elliptic differential operators on noncompact manifolds'', {\em Ann. Scuola Norm. Sup. Pisa Cl. Sci. (4)} {\bf 12} (1985), 409--447.

\bibitem{L}R. Lockhart, Fredholm, Hodge and Liouville theorems on noncompact manifolds, {\em Trans. Amer. Math. Soc.} {\bf 301} (1987), 1--35.

\bibitem{Lo1}J. Lotay, Deformation theory of asymptotically conical coassociative 4-folds, {\em Proc. Lond. Math. Soc. (3)} {\bf 99} (2009), 386--424.

\bibitem{Lo2}J. Lotay, Asymptotically conical associative $3$-folds, {\em Q. J. Math.} {\bf 62} (2011), 131--156.

\bibitem{Mc}R. C. McLean, ``Deformations of calibrated submanifolds'', {\em Comm. Anal. Geom.} {\bf 6} (1998), 705--747.

\bibitem{Pac}T. Pacini, Deformations of asymptotically conical special Lagrangian submanifolds, {\em Pacific J. Math.} {\bf 215} (2004), 151--181. 

\end{thebibliography}
\end{document}